\def\tsc#1{\csdef{#1}{\textsc{\lowercase{#1}}\xspace}}
\newtheoremstyle{notationstyle}
  {}{}
  {}
  {}
  {\bfseries}
  {.}
  { }
  {}
\theoremstyle{notationstyle}
\newtheorem*{notation}{Notation}
 \newtheorem{theorem}{Theorem}
 \newtheorem{lemma}[theorem]{Lemma}
 \newdefinition{rmk}{Remark}
\newdefinition{assumption}{Assumption}
\newdefinition{problem}{Problem}
\renewenvironment{proof}[1][\proofname]{\par
	\pushQED{\qed}
	\normalfont \topsep6\p@\@plus6\p@\relax
	\trivlist
	\item[\hskip\labelsep
	\bfseries
	#1\@addpunct{.}]\ignorespaces
}{%
	\popQED\endtrivlist\@endpefalse
}
\begin{document}
\let\WriteBookmarks\relax
\def\floatpagepagefraction{1}
\def\textpagefraction{.001}
\let\printorcid\relax

\shorttitle{SPI Algorithm for DT linear systems}

\shortauthors{Z. Pang,  S. Tang, C. Jun, S. He
}

\title [mode = title]{
	Scaling policy iteration based  reinforcement learning for unknown  discrete-time linear systems}   
	\tnotemark[1]     


\author[1]{Zhen Pang}
  \ead{ zhenpang@stu.gxnu.edu.cn
    }

    \author[1]{Shengda Tang}
    \ead{ tangsd911@163.com}
    \cormark[1]
    
    \author[1]{Jun Cheng}[]
    \ead{jcheng@gxnu.edu.cn}
    
    \author[2,3,4]{Shuping He}[]
    \ead{shuping.he@ahu.edu.cn}

    \affiliation[1]{organization={ School of Mathematics and Statistics},
    addressline={Guangxi Normal University},
    city={Guilin 541004},
    country={PR China}}
    \affiliation[2]{organization={Information Materials and Intelligent Sensing
    Laboratory  of Anhui Province },
    addressline={School of Electrical Engineering and
    Automation, Anhui University},
    city={Hefei 230601},
    country={PR China}} 

    \affiliation[3]{organization={School
    of Electronic Information and Electrical Engineering },
    addressline={ Chengdu University},
    city={Chengdu 610106},
    country={PR China}}
    \affiliation[4]{organization={Institute of Artificial Intelligence},
    addressline={ 
    Hefei Comprehensive National Science Center },
    city={Hefei 230088},
    country={PR China}}

\credit{Data curation, Writing - Original draft preparation}

\cortext[1]{Corresponding author} 
\tnotetext[1]{ The work described in this paper was supported by the National Natural Science Foundation of China (No. 61761008), Science and Technology Project of Guangxi (Guike
AD21220114).
}

\begin{abstract}
  In optimal control problem,  policy iteration (PI) is a powerful reinforcement learning (RL) tool used for designing optimal controller for the linear systems.  
   However, the need for an initial stabilizing control policy significantly limits its applicability.  
  	%
	  To address this constraint, this paper proposes a novel scaling technique, which progressively brings a sequence of stable scaled systems closer to the original system, enabling the acquisition of stable control gain.
  %
  Based on the designed scaling update law, we develop model-based and model-free scaling policy iteration (SPI) algorithms for solving the optimal control problem for discrete-time linear systems, in both known and completely unknown system dynamics scenarios. Unlike existing works on PI based RL, the SPI algorithms do not necessitate  an initial stabilizing gain to initialize the algorithms,  they can achieve the optimal control under any initial control gain.
  Finally, the numerical results validate the theoretical findings and confirm the effectiveness of the algorithms.
  
  \end{abstract}
  \begin{keywords}

    Optimal control\sep
      Scaling policy iteration\sep 
    Reinforcement learning\sep 
  Initial stabilizing control policy
    \end{keywords}
  \maketitle
\section{Introduction}\label{Sec_1}
In recent years, system stability and optimal control have remained key areas of focus in control theory \cite{rigatos_2017nonlinear}.  The optimal control problem centers on identifying the control input that best enables a system to achieve its predefined objective. By leveraging optimization algorithms, these control inputs allow for the precise regulation of complex systems, such as autonomous vehicles, industrial processes, and robotics, while taking into account factors like energy efficiency, safety, and stability \cite{2023_virtual}. 
A large number of numerical methods have emerged to resolve the optimal control problem, most of which are based on policy iteration (PI) due to its quadratic convergence rate.
It is worth noting that  PI has a wide range of applications, such as in model-based stochastic system control problems   and infinite-horizon discrete-time (DT) Markov decision problems with a discount factor \cite{bertsekas_1996_MDP,bertsekas_2019_MDP,winnicki_2023_MDP}.

To overcome the requirement for system information, reinforcement learning (RL) has also been introduced \cite{zamfirache_2023neural}.
Among the methods for solving optimal control problem, PI-based RL is one of the most effective approaches.
This method demonstrates tremendous capability and convenience, and 
it is commonly utilized to address optimization problems with system dynamics that are either model-free or partially model-free \cite{ RL_2,lai2023model}. 
%
As an example, \cite{track} utilized an off-policy RL algorithm to solve an optimal output tracking problem, where the system dynamics were entirely unknown. 
In \cite{li2022stochastic},  a partially model-free policy algorithm was introduced to design the optimal controller for the stochastic continuous-time (CT) linear system. 
Additionally, \cite{zero_sum_1_DT} presented a model-free RL algorithm that utilizes data collected throughout the system trajectories to tackle the zero-sum game problem for  DT systems.
For these majority of existing PI-based RL algorithms, an initial stabilizing control policy is required to initiate these algorithms, which maybe unavailable in some cases.
Generally, the design of a stabilizing control policy requires a priori knowledge of the system dynamics.
However, as modern engineering systems grow in scale and complexity, discerning their precise dynamics becomes an increasingly daunting task. 
Consequently, how to eliminate the reliance on  initial stable control for PI  has become  a critical issue. This motivated our research.

Some relevant research results have emerged in recent years.
Value iteration (VI) is widely recognized as a significant method for solving optimal control problems. While VI can start with any initial control input, it typically requires more iterations than PI. 
By combining the strengths of the VI and PI algorithms, generalized policy iteration (GPI) algorithm \cite{lee2014integral,ZhouGpI} and hybrid iteration (HI) algorithm \cite{HI2023adaptive,HICT,qasem2023hybrid,DT_HIFUZZY}  were developed, and both of these algorithms have effectively eliminated the reliance on  an  initial stabilizing control policy.  
In  \cite{lamperski2020computing,lai2023learning}, a discount factor-based method is proposed to attain optimal control solution without requiring an initial stabilizing control. 
Additionally, in \cite{de2019formulas,van2020data}, data-based methods are introduced for designing stabilizing control gain for DT systems by solving linear matrix inequality (LMI) or a system of equations. Subsequently, \cite{RL_2} builds on the approach from \cite{van2020data} by designing a deadbeat control gain matrix, which is then used to initialize a PI-based off-policy Q-learning algorithm, effectively addressing the linear quadratic regulator (LQR) problem. Which gives us a great inspiration.
  Furthermore,  for CT systems,
 \cite{chen2022homotopic}  has put forward a homotopy-based PI algorithm for CT optimal control problem, which a stabilizing control policy can be achieved by
 gradually pushing the stabilizing system to the original system.  
 As demonstrated in 
 \cite{wang2023,Homotopic_n},  the homotopy-based PI algorithm can be extensively employed to tackle various problems encountered in CT systems, such as $H_{\infty}$ control, the optimal control problems of CT Markovian jump systems and nonlinear systems, eliminating the requirement of an initial stabilizing control input. However, this algorithm is only applicable to CT systems and it cannot be directly parallelized to DT systems. 
 Therefore, whether it is possible to develop a similar approach for DT systems that eliminates the need for an initial stabilizing control strategy for PI is another motivation for our work.

 Motivated by the aforementioned works,  this paper introduces a novel technique to address the optimal control problem in DT linear systems. The main contributions of this work are as follows: Firstly,
 we propose an innovative scaling technique that can convert any initial control policy into a stabilizing one, offering a new perspective on solving optimal control problems. Secondly, based on this scaling technique, we developed both model-based and model-free scaling policy iteration (SPI) algorithms. These algorithms solve the optimal control problem for DT linear systems with known and completely unknown dynamics, respectively, starting from any initial control policy. Thirdly, the proposed scaling technique is highly versatile and can be flexibly applied to control problems in DT linear systems where $P_{i}$ exhibits monotonic non-decreasing behavior during PI.

  The remainder of this paper is structured as follows.
  Problem formulation and relevant preliminaries  are briefly introduced in Section \ref{Sec_2}.  A model-based algorithm is given in Section \ref{Sec_3}, and  a model-free algorithm is developed in Section \ref{Sec_4}.  In Section \ref{Sec_5},  a numerical example is presented,  and  Section \ref{sec_6} concludes this paper.  
  \begin{notation}
    Throughout this paper, the notation $\mathbb{R} ^{n}$ and $\mathbb{R} ^{n\times m}$  represent the set of real vectors with $n$ dimensions and the set of real matrices with dimensions $n\times m$, respectively. $||\cdot ||$ denotes the Euclidean norm for a vector or matrix of  appropriate size.  For a given matrix $Y\in\mathbb{R} ^{n\times m}$, we use $Y^{-1}$ and $Y^{T}$ to denote its inverse and transpose, respectively. 
   The symbol $\otimes$ stands  the Kronecker product, and $vec(Y)=[y_{1}^{T},y_{2}^{T},\dots,y_{m}^{T}]^{T}$, where $y_{i}\in \mathbb{R} ^{n} $ are the column vectors of $Y$. When the matrix $Y$ is a square matrix,  $\rho(Y)$ is employed to denote its spectral radius,  and symbol $\rho^{-1}(Y)$ stands the reciprocal of $\rho(Y)$.
   $\sigma(Y)$ and  $\sigma(Y)^{1/2}$ represent, respectively, the minimum singular value and the square root of the minimum singular value of matrix $Y$. $I_{n}$ denotes the identity matrix with dimensions $n \times n$. And zero vector or matrix is denoted by $0$. For a symmetric matrix $S$, $S>0$ (resp. $S\geqslant 0$) indicates that matrix $S$ is positive (resp. positive semidefinite). Specially, for $S \in \mathbb{R} ^{n\times n}$,
   define    $vecs(S)=[s_{11},2s_{12},\dots,2s_{1n},s_{22},2s_{23},\dots,2s_{(n-1,n)},s_{nn}]^{T}$, where $vecs(S) \in \mathbb{R} ^{\frac{1}{2}n\left( n+1 \right)}$. Similarly, given a vector $z \in \mathbb{R} ^{n}$, define $vecv(z)=[z_{1}^{2},z_{1}z_{2},\dots,z_{1}z_{n},z_{2}^{2},z_{2}z_{3},\dots,z_{n-1}z_{n},z_{n}^{2}]^{T}$.
  \end{notation}
  \section{Problem formulation and preliminaries}\label{Sec_2} 
In this section, we present a  description of the system model and present relevant preliminaries  of the current research, which helps us establish a solid foundation for our subsequent analysis.  


We consider the following  DT linear system given by
	\begin{align}\label{eq1}
		x_{k+1}=Ax_k+Bu_k,
     \end{align}
where $u_k\in \mathbb{R}^{m}$ and $x_k\in \mathbb{R} ^{n}$ are respectively the control input and the system state. The matrices $A \in \mathbb{R}^{n\times n}$ and $B \in \mathbb{R}^{n\times m}$ represent the state and input matrices, and  it should be noted that the matrix $A$ is not necessarily stable in this paper.

Define the associated 
 performance index as 
\begin{align} \label{eq2}
	     V\left( x_k \right)= \sum_{t=k}^{\infty}{\left[ x_{k}^{T}Qx_k+u_{k}^{T}Ru_k \right]},
\end{align}
where $Q=Q^{T}\geqslant 0$ and $R=R^{T}>0$ are the running weighting matrices
for the state and the control, respectively.
\begin{assumption}
	The  pair $(A,B)$ is  
	controllable, and the pair  $(A,\sqrt{Q})$ is observable.
\end{assumption}

The optimal control problem considered in this paper, i.e., the
  LQR problem, can be formulated as follows. 
 \begin{problem}
	Find an optimal control $u^{*}$ in  terms of  $u_{k} =-Kx_{k}$ to minimize (\ref{eq2}) and subject to (\ref{eq1}), where $K\in \mathbb{R} ^{m\times n}$ satisfies  $\rho \left( A-BK \right) <1$.
 \end{problem}

According to the  linear optimal control theory (\cite{lewis2012optimal}),   
  the optimal control policy  can be solved by
$	u^{\ast}_{k}  = -K^{\ast}x_k,$  with 
 \begin{align}\label{eq4}
 K^{\ast}= (R + B^{T}P^{\ast}B)^{-1}B^{T}P^{\ast}A,
 \end{align}  
 where the symmetric matrix $P^{\ast}= (P^{\ast})^{T} > 0$ is the unique solution of the following well-known DT algebraic Riccati equation (ARE)
 \begin{align}\label{eq6}
 	&A^{T}P^{\ast}A-P^{\ast}-A^{T}P^{\ast}B(R+B^{T}P^{\ast}B)^{-1}B^{T}P^{\ast}A\nonumber\\
 	&+Q=0.
 \end{align}

To solve the ARE (\ref{eq6}), the model-based PI algorithm, which forms the theoretical foundation for the development of our algorithms, was proposed by  \cite{hewer1971iterative} and is outlined in the following lemma.
	\begin{lemma} \label{lemma1}
		If $A-BK_{0}$ is  Schur stable,  which means that  $\rho(A-BK_{0})<1$. 
	 Let 
			\begin{align} \label{eq7}
			& P_i=Q+K_{i}^{T}RK_i+( A-BK_i ) ^TP_i(A-BK_i ),\\
			\label{eq8}
			&  K_{i+1}=( R+B^TP_iB) ^{-1}B^TP_iA,
		\end{align}
			for $i=0, 1, 2, \textcolor{blue}{\dots}$.
		Then, the following statements hold true:
		\begin{itemize}
			\item[(i)] $A-BK_{i}$ is Schur stable;
			\item[(ii)] $P^{*}\leqslant P_{i+1}\leqslant P_{i}$;
			\item[(iii)] $\underset{k\rightarrow \infty}{\lim}P_{i}=P^{*}$ , $\underset{i\rightarrow \infty}{\lim}K_{i}=K^{*}.$
		\end{itemize}
\end{lemma}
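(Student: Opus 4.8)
The plan is to argue by induction on $i$, using a single completion-of-squares identity as the workhorse and deriving every claim from discrete-time Lyapunov equations. The base case $i=0$ is precisely the hypothesis that $A-BK_0$ is Schur stable, which makes the Lyapunov equation \eqref{eq7} uniquely solvable for a symmetric $P_0\geqslant 0$, namely $P_0=\sum_{t=0}^{\infty}\big((A-BK_0)^t\big)^{T}(Q+K_0^{T}RK_0)(A-BK_0)^{t}$; Assumption 1 upgrades this to $P_0>0$ because $(A,\sqrt{Q})$ is observable. The one algebraic fact I would record first is that, for any symmetric $P\geqslant 0$ (so that $R+B^{T}PB>0$) and any gain $K$, completing the square in $K$ yields
\begin{align*}
 Q+K^{T}RK+(A-BK)^{T}P(A-BK)
   &= Q+A^{T}PA-A^{T}PB(R+B^{T}PB)^{-1}B^{T}PA \\
   &\quad +(K-K_P)^{T}(R+B^{T}PB)(K-K_P),
\end{align*}
where $K_P=(R+B^{T}PB)^{-1}B^{T}PA$. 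Evaluating at $P=P_i$ makes $K_P=K_{i+1}$ by \eqref{eq8}, so this identity is exactly what ties one iterate to the next.

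For the inductive step, assume $A-BK_i$ is Schur stable and $P_i>0$ solves \eqref{eq7}. Setting $K=K_{i+1}$ and $P=P_i$ in the identity and subtracting $P_i$ gives a Lyapunov inequality $(A-BK_{i+1})^{T}P_i(A-BK_{i+1})-P_i=-W_i$ with $W_i=Q+K_{i+1}^{T}RK_{i+1}+(K_i-K_{i+1})^{T}(R+B^{T}P_iB)(K_i-K_{i+1})\geqslant 0$. To prove statement (i) for $i+1$ I would exclude eigenvalues on the unit circle by a PBH-type argument: if $(A-BK_{i+1})v=\lambda v$ with $|\lambda|=1$ and $v\neq 0$, then $v^{*}W_iv=(|\lambda|^{2}-1)\,v^{*}P_iv=0$, which forces $\sqrt{Q}\,v=0$ and $K_{i+1}v=0$ (each summand of $W_i$ being positive semidefinite), hence $Av=\lambda v$ with $\sqrt{Q}\,v=0$, contradicting observability of $(A,\sqrt{Q})$. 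Thus $\rho(A-BK_{i+1})<1$. This is the \emph{main obstacle}: it is the only place where the conclusion is not a routine reading-off of the sign of a Lyapunov solution, and it is where Assumption 1 is used essentially.

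Once stability of $A-BK_{i+1}$ is secured, statement (ii) follows from the same mechanism. Subtracting the defining equation of $P_{i+1}$ from the identity evaluated at $K=K_{i+1}$, $P=P_i$ produces the Lyapunov equation
\begin{align*}
 P_i-P_{i+1}=(A-BK_{i+1})^{T}(P_i-P_{i+1})(A-BK_{i+1})+(K_i-K_{i+1})^{T}(R+B^{T}P_iB)(K_i-K_{i+1}),
\end{align*}
whose positive semidefinite source term and Schur coefficient force $P_i-P_{i+1}\geqslant 0$. For the lower bound I would run the identity at $P=P^{*}$, where the ARE \eqref{eq6} makes the residual vanish and $K_{P^{*}}=K^{*}$, giving an analogous equation $P_{i+1}-P^{*}=(A-BK_{i+1})^{T}(P_{i+1}-P^{*})(A-BK_{i+1})+(K_{i+1}-K^{*})^{T}(R+B^{T}P^{*}B)(K_{i+1}-K^{*})\geqslant 0$. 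Together these yield $P^{*}\leqslant P_{i+1}\leqslant P_i$.

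Finally, statement (iii) is a limiting argument. The sequence $\{P_i\}$ is monotonically nonincreasing in the Loewner order and bounded below by $P^{*}$, hence converges entrywise to some $P_\infty\geqslant P^{*}$. Passing to the limit in \eqref{eq7}--\eqref{eq8}, using $R+B^{T}P_\infty B>0$ for the continuity of \eqref{eq8}, shows that $P_\infty$ satisfies the ARE \eqref{eq6}; by uniqueness of the positive semidefinite solution of \eqref{eq6} under Assumption 1 we conclude $P_\infty=P^{*}$, and continuity of \eqref{eq8} then gives $K_i\to K^{*}$.
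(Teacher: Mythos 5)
The paper states this lemma without proof, citing Hewer (1971), so there is no in-paper argument to compare against; your write-up is a correct and complete rendition of the standard proof (completion of squares, Lyapunov inequality plus a PBH-type argument for stability, difference Lyapunov equations for the two monotonicity bounds, and monotone convergence to the unique positive definite ARE solution). The only point worth tightening is in (i): besides excluding $|\lambda|=1$, state explicitly that $|\lambda|>1$ is impossible because $(|\lambda|^{2}-1)\,v^{*}P_{i}v=-v^{*}W_{i}v\leqslant 0$ while $P_{i}>0$ --- the relation you already display delivers this.
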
 

	Based on Lemma \ref{lemma1}, the model-based PI algorithm requires an initial stabilizing control gain matrix. Consequently, the model-free PI algorithm is similarly constrained by this requirement. 	
	However, when the system information is completely unknown, obtaining a stabilizing control gain matrix is a daunting task. To overcome this difficulty, \cite{HI2023adaptive} and \cite{de2019formulas,van2020data,RL_2} respectively used VI and data-based methods to find the initial stabilizing control gain matrix. 
		Unlike DT systems, there is a homotopy-based algorithm for finding an initial stabilization policy in CT systems, which is completely dependent on the PI and is highly applicable. Therefore, inspired by the above works,  		
		this paper presents for the first time a PI-based scaling technique for DT linear systems, which transforms an arbitrarily given control gain matrix into a stabilizing control gain matrix from a novel perspective. 
	 And then, based on the scaling technique, we present a SPI algorithm in both the model-based and model-free scenarios to solve the optimal control problem. 
\section{Model-based SPI algorithm design}\label{Sec_3}
In this section, leveraging the known system matrices $A$ and $B$, we introduce a model-based SPI algorithm  to solve the ARE (\ref{eq6}) for the system (\ref{eq1}). The outcomes presented in this section will lay the groundwork for the design of model-free SPI algorithm in the subsequent  section.

 
To develop the SPI algorithm in accordance with the system dynamics, we first give the ensuing  Lemma \ref{lemma2}. 

	\begin{lemma}\label{lemma2}
	Given a control gain matrix $\widetilde{K}_0 $  arbitrarily,  let $c_{0}=1$, and select a constant $b$,  such that  
	 \begin{align}\label{eq9}
	b>\rho(A-B\widetilde{K}_{0}).
	\end{align}
	Then,  the subsequent statements are confirmed:
			\item[(i)]  The matrix $\frac{\prod_{j=0}^i{c_j}}{b}( A-B\widetilde{K}_i )$ is Schur stable for all $i = 0,  1, 2, \ldots$.
			\item[(ii)]	  	The following  Lyapunov equation
		\begin{align}\label{eq10}
		&\frac{\prod_{j=0}^i{c^{2}_j}}{b^2}( A-B\widetilde{K}_i ) ^T\widetilde{P}_{i}( A-B\widetilde{K}_i )-\widetilde{P}_{i}+Q\nonumber\\
			&+\widetilde{K}_{i}^{T}R\widetilde{K}_i=0
	\end{align}
	has an unique solution   $\widetilde{P}_i=\widetilde{P}^{T}_i>0$ 	for each $i=0,1,2,\ldots,$ where
		\begin{align}\label{eq11}
	&	\widetilde{K}_{i+1}=\Big( B^T\widetilde{P}_{i}B+\frac{b^2}{\prod_{j=0}^i{c^{2}_j}}R \Big) ^{-1}B^T\widetilde{P}_{i}A, \\
\label{eq12}
&			1< c_{i+1} <\rho^{-1}\Big (\frac{\prod_{j=0}^{i}{c_j}}{b}(A-B\widetilde{K}_{i+1})\Big).
	\end{align}
\end{lemma}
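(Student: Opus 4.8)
The plan is to prove both statements simultaneously by induction on $i$, exploiting the observation that the recursion (\ref{eq10})--(\ref{eq11}) is nothing but Hewer's policy iteration (Lemma \ref{lemma1}) applied to a \emph{scaled} system. I introduce the scalar $\alpha_i := \frac{\prod_{j=0}^i c_j}{b}$ together with the scaled matrices $\widehat{A}_i := \alpha_i A$ and $\widehat{B}_i := \alpha_i B$. Since $\frac{b^2}{\prod_{j=0}^i c_j^2} = \alpha_i^{-2}$, clearing the factor $\alpha_i^{-2}$ shows that (\ref{eq11}) is exactly $\widetilde{K}_{i+1} = (\widehat{B}_i^T \widetilde{P}_i \widehat{B}_i + R)^{-1}\widehat{B}_i^T \widetilde{P}_i \widehat{A}_i$, while (\ref{eq10}) reads $M_i^T \widetilde{P}_i M_i - \widetilde{P}_i + Q + \widetilde{K}_i^T R \widetilde{K}_i = 0$ with $M_i := \widehat{A}_i - \widehat{B}_i\widetilde{K}_i = \alpha_i(A - B\widetilde{K}_i)$. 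Thus (\ref{eq10})--(\ref{eq11}) is precisely the policy-evaluation/policy-improvement pair of Lemma \ref{lemma1} for the pair $(\widehat{A}_i, \widehat{B}_i)$ with weights $(Q,R)$.

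The induction hypothesis at step $i$ is statement (i), namely that $M_i$ is Schur stable. For the base case $i=0$ one has $\alpha_0 = 1/b$, so $\rho(M_0) = \rho(A - B\widetilde{K}_0)/b < 1$ by (\ref{eq9}). For the inductive step I would first establish (ii) at index $i$: because the controllability and observability of Assumption 1 are invariant under the nonzero scaling $(A,B)\mapsto(\alpha_i A,\alpha_i B)$, a PBH test using $R>0$ and the observability of $(A,\sqrt{Q})$ shows that $(M_i,(Q+\widetilde{K}_i^T R\widetilde{K}_i)^{1/2})$ is observable; combined with the Schur stability of $M_i$, this makes (\ref{eq10}) a standard discrete Lyapunov equation whose unique solution $\widetilde{P}_i = \sum_{t=0}^{\infty} (M_i^T)^t (Q + \widetilde{K}_i^T R \widetilde{K}_i) M_i^t$ is symmetric and positive definite.

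With $\widetilde{P}_i > 0$ in hand I would then apply Lemma \ref{lemma1}(i) to the scaled pair $(\widehat{A}_i, \widehat{B}_i)$, starting from the stabilizing gain $\widetilde{K}_i$ whose closed loop is $M_i$: the improved gain $\widetilde{K}_{i+1}$ of (\ref{eq11}) makes $\widehat{A}_i - \widehat{B}_i \widetilde{K}_{i+1} = \alpha_i(A - B\widetilde{K}_{i+1})$ Schur stable, i.e. $\rho(\alpha_i(A - B\widetilde{K}_{i+1})) < 1$. Equivalently $\rho^{-1}(\alpha_i(A - B\widetilde{K}_{i+1})) > 1$, so the interval in (\ref{eq12}) is nonempty and an admissible $c_{i+1}$ exists. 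Finally, since $\alpha_{i+1} = c_{i+1}\alpha_i$ and the spectral radius is positively homogeneous, (\ref{eq12}) yields $\rho(M_{i+1}) = c_{i+1}\,\rho(\alpha_i(A - B\widetilde{K}_{i+1})) < 1$, which is statement (i) at step $i+1$ and closes the induction.

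The main obstacle is the one-step stability propagation, i.e. that the improved gain keeps the scaled closed loop Schur stable; but this is exactly the content of Lemma \ref{lemma1}(i), so the genuine work lies in the bookkeeping that recasts (\ref{eq10})--(\ref{eq11}) as Hewer's iteration for $(\widehat{A}_i, \widehat{B}_i)$ and in verifying that the admissible range for $c_{i+1}$ in (\ref{eq12}) is never empty. I would also remark that the recursion is well-ordered rather than circular, despite $c_{i+1}$ appearing to depend on quantities it helps define: at step $i$ one first solves (\ref{eq10}) for $\widetilde{P}_i$ from data already fixed, then forms $\widetilde{K}_{i+1}$ via (\ref{eq11}), then selects $c_{i+1}$ from the nonempty interval (\ref{eq12}), and only then assembles $\alpha_{i+1}$.
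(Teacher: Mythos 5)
Your proof is correct, and it follows the same skeleton as the paper's: induction on $i$, with the whole weight resting on the one-step claim that the improved gain $\widetilde{K}_{i+1}$ keeps the \emph{current} scaled closed loop $\frac{\prod_{j=0}^{i}c_j}{b}(A-B\widetilde{K}_{i+1})$ Schur stable, after which (\ref{eq12}) is a nonempty interval and the choice of $c_{i+1}$ preserves $\rho(\cdot)<1$ by homogeneity of the spectral radius. Where you diverge is in how that one-step claim is justified. The paper proves it from scratch: it takes $V_{\ell}(x_k)=x_k^T\widetilde{P}_{\ell}x_k$ as a candidate Lyapunov function along the trajectory of the scaled system under $\widetilde{K}_{\ell+1}$, expands $\Delta V_{\ell}$, substitutes the gain-update formula, and shows $\Delta V_{\ell}<0$ — essentially re-deriving Hewer's one-step stability argument inside the scaled setting. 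You instead observe that, with $\alpha_i=\frac{\prod_{j=0}^i c_j}{b}$, equations (\ref{eq10})--(\ref{eq11}) are \emph{exactly} the policy-evaluation/policy-improvement pair of Lemma \ref{lemma1} for the pair $(\alpha_i A,\alpha_i B)$ with the original weights $(Q,R)$, and then cite Lemma \ref{lemma1}(i) with initial gain $\widetilde{K}_i$. That reduction is legitimate (controllability of $(\alpha_i A,\alpha_i B)$ and observability of $(\alpha_i A,\sqrt{Q})$ survive nonzero scaling, and your bookkeeping for the $\alpha_i^{-2}R$ weight is right), and it buys a shorter, less computational proof at the cost of leaning on Hewer's lemma as a black box for each freshly scaled pair. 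You also make explicit the observability/PBH argument for positive definiteness of $\widetilde{P}_i$ under $Q\geqslant 0$, a point the paper asserts without detail. No gaps.
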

	
\begin{proof}  The lemma will be proven using induction, which is initialized by considering the case when $i = 0.$
		 Given $\widetilde{K}_0$, and noting $b>\rho(A-B\widetilde{K}_{0})$, $c_{0}=1$, one has
		$$	\frac{c_{0}}{b}	\rho ( A-B\widetilde{K}_0) =	\rho \left( \frac{1}{b}(A-B\widetilde{K}_0) \right) <1 .$$
		
		Then, the following Lyapunov equation	
		\begin{align*}
		&\frac{1}{b^2}( A-B\widetilde{K}_0) ^T\widetilde{P}_{0}( A-B\widetilde{K}_0 )-\widetilde{P}_{0}+Q+\widetilde{K}_{0}^{T}R\widetilde{K}_0=0
	\end{align*}
	admits  a unique positive definite symmetric matrix  solution $\widetilde{P}_0.$


Now, we suppose that the statements  hold for $i=\ell$. This leads to that
	there exists an unique positive definite symmetric  solution  $\widetilde{P}_\ell$  to
		\begin{align}\label{eq_P_l}
		&\frac{\prod_{j=0}^{\ell}{c^{2}_j}}{b^{2}}( A-B\widetilde{K}_{\ell} ) ^T\widetilde{P}_{\ell}( A-B\widetilde{K}_{\ell} ) -\widetilde{P}_{\ell}+Q
			+\widetilde{K}_{\ell}^{T}R\widetilde{K}_{\ell}=0,
	\end{align}
and from (\ref{eq11}) we have 
\begin{align}\label{eq_K_l+1}
	\widetilde{K}_{\ell+1}=\Big( B^T\widetilde{P}_{\ell}B+\frac{b^2}{\prod_{j=0}^{\ell}{c^{2}_j}}R \Big) ^{-1}B^T\widetilde{P}_{\ell}A.
\end{align} 
Now, we will show that if $c_{\ell+1}$ satisfies (\ref{eq12}) with $i=\ell$, then the statements hold for $i=\ell+1$. 

Consider the positive definite cost function $ V_{\ell}(x_k) = x_k^T \widetilde{P}_{\ell} x_k $ as a potential Lyapunov function for the state trajectories governed by the control input $ u_{k} =- \widetilde{K}_{\ell+1} x_{k} $, where $ x_{k+1} = \frac{\prod_{j=0}^{\ell} c_j}{b} (Ax_{k} + Bu_{k})$.
Taking the difference of $V_{\ell}( x_k )$ along the trajectory generated by $\widetilde{K}_{\ell+1}$ yields:
\begin{align*}
	\varDelta  V_{\ell}( x_k ) =&V_{\ell}( x_{k+1} ) -V_{\ell}( x_k ) 
\nonumber\\
=&x_{k}^{T}\Big\{ \frac{\prod_{j=0}^{\ell}{c_{j}^{2}}}{b^2}\Big[ ( A-B\widetilde{K}_{\ell} ) ^T\widetilde{P}_{\ell}( A-B\widetilde{K}_{\ell} ) 
\nonumber\\
	&+( \widetilde{K}_{\ell}-\widetilde{K}_{\ell +1} ) ^TB^T\widetilde{P}_{\ell}B( \widetilde{K}_{\ell}-\widetilde{K}_{\ell +1} ) 
	\nonumber\\
	&+2( \widetilde{K}_{\ell}-\widetilde{K}_{\ell +1} ) ^T\!B^T\widetilde{P}_{\ell}( A-B\widetilde{K}_{\ell} ) \Big] -\widetilde{P}_{\ell} \Big\} x_k.
\end{align*}
From (\ref{eq_K_l+1}) and performing some mathematical operations we get
\begin{align*}
	&2\frac{\prod_{j=0}^{\ell}{c_{j}^{2}}}{b^2}x_{k}^{T}( \widetilde{K}_{\ell}-\widetilde{K}_{\ell +1} ) ^TB^T\widetilde{P}_{\ell}( A-B\widetilde{K}_{\ell} )x_{k}
	\\
	=&x_{k}^{T}\Big[\!-2\frac{\prod_{j=0}^{\ell}{c_{j}^{2}}}{b^2}( \widetilde{K}_{\ell}\!-\widetilde{K}_{\ell +1} ) ^TB^T\widetilde{P}_{\ell}B( \widetilde{K}_{\ell}\!-\widetilde{K}_{\ell +1} )\!+\!\widetilde{K}_{_{\ell}}^{T}R\widetilde{K}_{\ell}
	\\
	& -\!( \widetilde{K}_{\ell}
	-\widetilde{K}_{\ell +1} ) ^T\!R( \widetilde{K}_{\ell}-\widetilde{K}_{\ell +1} ) -\widetilde{K}_{_{\ell +1}}^{T}R\widetilde{K}_{\ell +1}\Big]x_{k}.
\end{align*}
Bringing this  into the above equation and combining it with equation (\ref{eq_P_l}), one obtains
\begin{align*}
	\varDelta V_{\ell}( x_k ) =&-x_{k}^{T}\Big[Q+  ( \widetilde{K}_{\ell}-\widetilde{K}_{\ell +1} ) ^T(\frac{\prod_{j=0}^{\ell}{c_{j}^{2}}}{b^2}B^T\widetilde{P}_{\ell}B+R)\\
	&\times( \widetilde{K}_{\ell}-\widetilde{K}_{\ell +1} ) +\widetilde{K}_{_{\ell +1}}^{T}R\widetilde{K}_{\ell +1} \Big] x_k
	<0.
\end{align*}
Therefore, $V_{\ell}( x_k )$ is a Lyapunov function, one then has that

	$$\rho\Big( \frac{\prod_{j=0}^{\ell}{c_j}}{b}(A-B\widetilde{K}_{\ell+1}) \Big)<1.$$
Noting (\ref{eq12}), then one has
\begin{align}
	&\rho\Big(\frac{\prod_{j=0}^{\ell+1}{c_j}}{b}(A-B\widetilde{K}_{\ell+1})\Big)	\nonumber \\
	=&c_{\ell+1}\rho\Big(\frac{\prod_{j=0}^{\ell}{c_j}}{b}(A-B\widetilde{K}_{\ell+1})\Big) 
	<1. \label{eq13}
	\end{align}
	Based on (\ref{eq13}), we can find uniquely a positive definite symmetric  solution $\widetilde{P}_{\ell+1}$ to solve
		\begin{align*}
		&\frac{\prod_{j=0}^{\ell+1}{c^{2}_j}}{b^2}( A-B\widetilde{K}_{\ell+1} ) ^T\widetilde{P}_{\ell+1}( A-B\widetilde{K}_{\ell+1} )-\widetilde{P}_{\ell+1}+Q
		\nonumber\\
		&	+\widetilde{K}_{\ell+1}^{T}R\widetilde{K}_{\ell+1}=0.
	\end{align*}
	By mathematical induction, it can be proven that the given statements hold for every $i = 0, 1, 2,\ldots$. 
\end{proof}
\begin{rmk}	Lemma \ref{lemma2} can be seen as the scaling technique proposed in this paper.
	This Lemma  emphasizes the crucial role of term $ \prod_{j=0}^{i}{c_j}$   in scaling the original system (\ref{eq1}). Specifically, we refer to $\prod_{j=0}^{i}{c_j}$  as the cumulative factor and $c_i$  as the scaling factor. 
	In Lemma \ref{lemma2},  the choice of the constant $b$ is clearly dependent on both the system parameter matrices and the already determined initial control gain.
	In general, setting a larger value of $b$ will increase the stability of the system $(\frac{1}{b}A, \frac{1}{b}B)$, but this may require more iterations to fully compensate for its impact through the cumulative factor $\prod_{j=0}^{i}{c_j}$.

\end{rmk}


For convenience, we denote system $x_{k+1}= \frac{\prod_{j=0}^i{c_j}}{b}(Ax_k+Bu_k)$  as $\big(\frac{\prod_{j=0}^i{c_j}}{b}A, \frac{\prod_{j=0}^i{c_j}}{b}B\big)$. 
Lemma \ref{lemma2} provides a method to seek the stable control gain matrix of the original system  $(A, B)$. Based on (\ref{eq9})-(\ref{eq12}), we can build a sequence of system $\big(\frac{\prod_{j=0}^i{c_j}}{b}A, \frac{\prod_{j=0}^i{c_j}}{b}B\big)$ for $i=0,1,2,\ldots$. Based on Lemma \ref {lemma2}, we  then achieve a control gain sequence $\widetilde{K}_{i}$, which can stabilize the current system $\big(\frac{\prod_{j=0}^i{c_j}}{b}A, \frac{\prod_{j=0}^i{c_j}}{b}B\big)$  for $i=0,1,2,\ldots$.
Given an integer $\hat{i}$, if  $ \frac{\prod_{j=0}^{\hat{i}}{c_j}}{b}\geqslant1$ is satisfied, then 
 the closed-loop system (\ref{eq1}) can be stabilized by the control gain $\widetilde{K}_i$ for $i \geqslant \hat{i}$. 
 Therefore, the control gain $\widetilde{K}_i$ for $i\geqslant \hat{i}$ can serve as the initial  stabilizing control gain in Lemma \ref{lemma1}, which can be utilized to solve the optimal control problem. 
 The existence of $\hat{i}$ will be shown subsequently.

Drawing from the above discussion, a model-based  SPI algorithm has been devised for solving the optimal control problem utilizing the system information, as depicted in Algorithm \ref{alg:1}.


	From  Algorithm \ref{alg:1}, it is easy to see that the SPI algorithm designed in this paper consists of two main phases. 
In the first phase, we employ a scaling technique to achieve a stable control gain. 
Specifically, we start by arbitrarily selecting an initial  control gain $\widetilde{K}_{0}$, which doesn't need to stabilize the system. If \(\widetilde{K}_{0}\) is not stabilizing, we scale the original system parameter matrices such that the scaled system is Schur stable under this gain. 
Then, following our designed scaling principle, we progressively scale the system while ensuring that the scaled system remains Schur stable with the corresponding control gain generated throughout this process. This iteration is repeated until the final obtained control gain stabilizes the original system. 
In the second phase, utilizing the stabilized control gain initial PI  to obtain the  optimal control input and minimize performance index (\ref{eq2}). 
\begin{algorithm}[!h]
    \caption{Model-Based SPI Algorithm }\label{alg:1}
    \renewcommand{\algorithmicrequire}{\textbf{Initialize:}}
    \begin{algorithmic}[1]
        \REQUIRE Given any $\widetilde{K}_{0}\in \mathbb{R}^{m\times n}$,  set a prescribed small enough scalar $\mathcal{E}>0$, select $b$ such that $b>\rho(A-B\widetilde{K}_{0})$. Given iterative index $i=0$, $i_{max}$, and set $c_{0}=1$.
		\renewcommand{\algorithmicrequire}{\textbf{Iterative learning:}}
		\REQUIRE 
		\FOR{$i=0:i_{max}$}	
		\IF {$ \frac{\prod_{j=0}^{i}{c_j}}{b}<1$}
		\STATE Solve $\widetilde{P}_{i}$ from (\ref{eq10}) and update $\widetilde{K}_{i+1}$  by (\ref{eq11}).
		\STATE Determine $c_{i+1}$ satisfies (\ref{eq12}).
		 \ELSE
	\STATE $b \gets1$, $\prod_{j=0}^{i}{c_j} \gets1$.
	\STATE Solve $\widetilde{P}_{i}$ from (\ref{eq10}) and update $\widetilde{K}_{i+1}$  by (\ref{eq11}).
	\IF { $i\geqslant 1$ and $ ||\widetilde{P}_{i}-\widetilde{P}_{i-1}||<\mathcal{E}$}
	\STATE  \textbf{break} 
	\ENDIF
	\ENDIF
	\ENDFOR
	\RETURN $P^{*}\gets \widetilde{P}_{i};K^{*}\gets  \widetilde{K}_{i+1}$.
           \end{algorithmic}
\end{algorithm}
\begin{theorem} \label {theorem_algorithm_1_Convergences}
	The matrix series $ \{\widetilde{P}_i \}$ for $i=0,1,2,\dots, \hat{i},\ldots$, generated by Algorithm \ref{alg:1}, converges to the sole positive definite solution of the ARE (\ref{eq6}), that is, $\underset{i\rightarrow \infty}{\lim}\widetilde{P}_i=P^*$.
\end{theorem}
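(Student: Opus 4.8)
The plan is to leverage the two-phase structure of Algorithm~\ref{alg:1}: a scaling phase, certified by Lemma~\ref{lemma2}, followed by a classical policy-iteration phase, certified by Lemma~\ref{lemma1}. First I would verify that the scaling phase is well posed. As long as the activation test $\frac{\prod_{j=0}^{i}c_{j}}{b}<1$ holds, Lemma~\ref{lemma2} guarantees that (\ref{eq10}) has a unique $\widetilde{P}_{i}=\widetilde{P}_{i}^{T}>0$, that the update (\ref{eq11}) is well defined, and that a scaling factor $c_{i+1}$ satisfying (\ref{eq12}) exists, since the Lyapunov computation inside the proof of Lemma~\ref{lemma2} forces $\rho\big(\frac{\prod_{j=0}^{i}c_{j}}{b}(A-B\widetilde{K}_{i+1})\big)<1$ and hence makes the upper endpoint of (\ref{eq12}) strictly larger than one.

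The crux of the argument, and the step I expect to be the main obstacle, is to prove that the scaling phase terminates, i.e., that there is a finite index $\hat{i}$ with $\frac{\prod_{j=0}^{\hat{i}}c_{j}}{b}\geqslant 1$. Because every $c_{j}>1$, the cumulative factor $\prod_{j=0}^{i}c_{j}$ increases strictly; the only way termination could fail is if it converged to a finite limit $L\leqslant b$ while $c_{i}\to 1$. I would rule this out by contradiction. A direct rewriting shows that (\ref{eq10})--(\ref{eq11}) are exactly the policy-iteration recursions (\ref{eq7})--(\ref{eq8}) applied to the scaled pair $\big(\frac{\prod_{j=0}^{i}c_{j}}{b}A,\frac{\prod_{j=0}^{i}c_{j}}{b}B\big)$ with the fixed weights $Q,R$. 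If $\prod_{j=0}^{i}c_{j}\to L$, this scaled pair converges to the fixed controllable pair $\big(\frac{L}{b}A,\frac{L}{b}B\big)$, so the moving-target iteration freezes and, by the convergence statement of Lemma~\ref{lemma1}, $(\widetilde{P}_{i},\widetilde{K}_{i+1})$ tends to the optimal pair of the limiting problem, whose closed loop $\frac{L}{b}(A-B\widetilde{K}_{\infty})$ is Schur with spectral radius strictly below one. Consequently the upper endpoint $\rho^{-1}\big(\frac{\prod_{j=0}^{i}c_{j}}{b}(A-B\widetilde{K}_{i+1})\big)$ of (\ref{eq12}) stays bounded away from one, so the natural rule of taking $c_{i+1}$ a fixed fraction into its admissible interval keeps $c_{i+1}$ bounded away from one for all large $i$; the resulting geometric growth contradicts $\prod_{j=0}^{i}c_{j}\to L<\infty$. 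Hence $\hat{i}$ exists.

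Once $\hat{i}$ is secured, I would transfer stability to the original plant. Lemma~\ref{lemma2}(i) gives that $\frac{\prod_{j=0}^{\hat{i}}c_{j}}{b}(A-B\widetilde{K}_{\hat{i}})$ is Schur, so $\rho(A-B\widetilde{K}_{\hat{i}})<\frac{b}{\prod_{j=0}^{\hat{i}}c_{j}}\leqslant 1$ and $\widetilde{K}_{\hat{i}}$ stabilizes $(A,B)$. At iteration $\hat{i}$ the algorithm resets $b\gets 1$ and $\prod_{j=0}^{\hat{i}}c_{j}\gets 1$, which collapses (\ref{eq10}) and (\ref{eq11}) precisely into (\ref{eq7}) and (\ref{eq8}); thus from index $\hat{i}$ onward the iterates coincide with ordinary policy iteration started from the stabilizing gain $\widetilde{K}_{\hat{i}}$.

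Finally I would invoke Lemma~\ref{lemma1} directly. With the Schur-stable initialization $\widetilde{K}_{\hat{i}}$, together with controllability of $(A,B)$ and observability of $(A,\sqrt{Q})$, statement (iii) of Lemma~\ref{lemma1} yields $\lim_{i\to\infty}\widetilde{P}_{i}=P^{*}$, the unique positive definite solution of the ARE (\ref{eq6}), which is exactly the assertion; statement (ii) of Lemma~\ref{lemma1} moreover gives the monotone bracketing $P^{*}\leqslant\widetilde{P}_{i+1}\leqslant\widetilde{P}_{i}$ that justifies the stopping test in Algorithm~\ref{alg:1}. The only care needed in this concluding step is the index bookkeeping that identifies the tail of $\{\widetilde{P}_{i}\}$ with the policy-iteration sequence of Lemma~\ref{lemma1}.
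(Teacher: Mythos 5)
Your proposal follows the same two--phase skeleton as the paper's proof: show the scaling phase reaches, in finitely many steps, an index $\hat{i}$ with $\frac{\prod_{j=0}^{\hat{i}}c_j}{b}\geqslant 1$, deduce from Lemma~\ref{lemma2}(i) that $\rho(A-B\widetilde{K}_{\hat{i}})<\frac{b}{\prod_{j=0}^{\hat{i}}c_j}\leqslant 1$, note that the reset $b\gets 1$, $\prod_{j=0}^{i}c_j\gets 1$ collapses (\ref{eq10})--(\ref{eq11}) into (\ref{eq7})--(\ref{eq8}), and invoke Lemma~\ref{lemma1}(iii) on the tail. Where you genuinely diverge is the termination step, which you rightly flag as the crux. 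The paper writes $c_j=1+\gamma_j$, sets $\gamma=\min_j\gamma_j$, bounds $\frac{1}{b}\prod_{j=1}^{i}(1+\gamma_j)\geqslant\frac{1}{b}(1+\gamma)^{i}$ and appeals to geometric growth; this is elementary but silently assumes the increments $\gamma_j$ are uniformly bounded away from zero (the $\gamma$ it defines depends on $i$, so the displayed lower bound does not by itself diverge). You instead argue by contradiction: if $\prod_{j}c_j$ stagnated at some $L\leqslant b$, the scaled plants would converge, the upper endpoint $\rho^{-1}\big(\frac{\prod_{j=0}^{i}c_j}{b}(A-B\widetilde{K}_{i+1})\big)$ in (\ref{eq12}) would stay bounded away from $1$, and a selection rule taking $c_{i+1}$ a fixed fraction into its admissible interval could not then satisfy $c_{i+1}\to 1$. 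This correctly isolates the real danger ($c_i\downarrow 1$) that the paper glosses over, and it makes explicit that termination is a property of the \emph{selection rule} for $c_{i+1}$, not of (\ref{eq12}) alone. What it buys in honesty it pays for in an unproved intermediate claim: that the ``moving-target'' iteration freezes and $(\widetilde{P}_i,\widetilde{K}_{i+1})$ converges to the optimum of the limiting pair $\big(\frac{L}{b}A,\frac{L}{b}B\big)$ is a nontrivial perturbation statement that Lemma~\ref{lemma1} does not directly supply, since the monotonicity $\widetilde{P}_{i+1}\leqslant\widetilde{P}_i$ is lost once the plant changes between iterations. So neither termination argument is fully airtight, but yours and the paper's fail (or succeed) for different reasons; everything after the existence of $\hat{i}$ is identical in the two proofs.
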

\begin{proof}
	Firstly, let we prove that the existence of $\hat{i}$, which is defined earlier. 
		For any given $\widetilde{K}_{0}$,  it can be inferred from (\ref{eq9}) that $b$ is a finite constant. 		
		If $\widetilde{K}_{0}$ is stable and $b$ is chosen such that $b < 1$, then $\hat{i} = 0$. However, if $b > 1$ or $\widetilde{K}_{0}$ is unstable, it follows from(\ref{eq12})  that 		
	  $c_{i} > 1$ for $i=1,2,\dots$, then  $c_{i}$ can be written as $c_{i}=1+\gamma_i$ with $\gamma _i>0$. Let $\gamma  =\min \left\{ \gamma _j,j=1,\dots ,i \right\} $, then one has 
	 \begin{align*}
		\frac{\prod_{j=1}^i{c_j}}{b}=\frac{1}{b}\prod_{j=1}^i{\left( 1+\gamma _j \right)}\geqslant \frac{1}{b}\left( 1+\gamma  \right) ^i.
	 \end{align*}
	 It is well known that the exponential function with a base greater than 1 is monotonically increasing and grows infinite, 
	  therefore, there  exists a positive integer $\hat{i}$, such that $\frac{1}{b}\left( 1+\gamma  \right) ^{\hat{i}}\geqslant1$, that is $\frac{\prod_{j=1}^{\hat{i}}{c_j}}{b}\geqslant1$.	 
	 This means that by iteratively performing Steps 3 and 4 of Algorithm \ref{alg:1}, we can obtain a control gain matrix $\widetilde{K}_{\hat{i}}$ that renders the matrix $A - B\widetilde{K}_{\hat{i}}$ is Schur stable. 
	  Commencing with the stable control gain matrix $\widetilde{K}_{\hat{i}}$, 
	 Algorithm \ref{alg:1} will be equivalent to the model-based PI algorithm outlined in Lemma \ref{lemma1}.
	 	Therefore  the convergence of  Algorithm \ref{alg:1} can be guaranteed.  The proof is finished.
\end{proof}
\begin{rmk}

		It is worth noting that the technique we employed is fundamentally different from that presented in \cite{chen2022homotopic}.
		For CT systems, stability is determined by Hurwitz criteria, requiring that all eigenvalues of the matrix $A + BK$ have negative real parts. Consequently, \cite{chen2022homotopic} employs a translation technique to achieve a stable control gain matrix.
		In contrast, for DT systems, stability is assessed using Schur stabilization, which requires the spectral radius of the matrix $A + BK$ to be less than $1$. This necessitates consideration of both the real and imaginary parts of the eigenvalues, making the analysis of DT systems inherently more complex. In this paper, we introduce a scaling technique that constructs a sequence of stable control systems, effectively converting any unstable control gain matrix into a stable one. This represents a novel contribution of our work.
\end{rmk}
\section{Model-free SPI algorithm design}
	\label{Sec_4}
	In this section, by employing data-driven techniques \cite{stability_2021}, we eliminate the assumption of complete knowledge of all system matrices and propose a model-free SPI algorithm based on data samples. This algorithm solves the optimal control problem for system (\ref{eq1}) while ensuring system stability.


	\subsection{ Model-free  SPI algorithm}
	

To develop the model-free SPI algorithm, one needs to utilize the trajectory of the system (1) as a foundation and has
	\begin{align*}
		&\frac{\prod_{j=0}^{i}{c_{j}^{2}}}{b^{2}}x_{k+1}^{T}\widetilde{P}_{i}x_{k+1}-x_{k}^{T}\widetilde{P}_{i}x_k 
		\nonumber\\
		=&x_{k}^{T}\Big[ \frac{\prod_{j=0}^{i}{c_{j}^{2}}}{b^{2}}( A-B\widetilde{K}_i ) ^T\widetilde{P}_{i}( A-B\widetilde{K}_i ) -\widetilde{P}_{i} \Big] x_k 
		\nonumber\\
		&+\frac{2\prod_{j=0}^{i}{c_{j}^{2}}}{b^{2}}x_{k}^{T}( A-B\widetilde{K}_i ) ^T\widetilde{P}_{i}B( \widetilde{K}_ix_k+u_k )
		\nonumber\\
		& +\frac{\prod_{j=0}^{i}{c_{j}^{2}}}{b^{2}}( \widetilde{K}_ix_k+u_k ) ^TB^T\widetilde{P}_{i}B( \widetilde{K}_ix_k+u_k )\nonumber\\
		=&-x_{k}^{T}( Q+\widetilde{K}_{i}^{T}RK_i ) x_k-\frac{\prod_{j=0}^{i}{c_{j}^{2}}}{b^{2}}x_{k}^{T}\widetilde{K}_{i}^{T}B^T\widetilde{P}_{i}B\widetilde{K}_ix_k
		\nonumber\\
		&+\frac{\prod_{j=0}^{i}{c_{j}^{2}}}{b^{2}}(2x_{k}^{T}A^T\widetilde{P}_{i}B( \widetilde{K}_ix_k+u_k )
		+u_{k}^{T}B^T\widetilde{P}_{i}Bu_k).
	\end{align*}
We write the above equation in the following form
\begin{align}\label{eq14}
		&\big( \frac{\prod_{j=0}^{i}{c_{j}^{2}}}{b^{2}}vecv(x_{k+1})-vecv(x_{k} )\big) vecs( \widetilde{P}_{i} )-\frac{\prod_{j=0}^{i}{c_{j}^{2}}}{b^{2}}
		\nonumber\\
		&\times\Big[2( x_{k}^{T}\otimes x_{k}^{T}\cdot \widetilde{K}_{i}^{T}\otimes I_n+u_{k}^{T}\otimes x_{k}^{T} )vec( A^T\widetilde{P}_{i}B ) \nonumber\\
		&-\left( vecv(K_{i}x_{k})-vecv(u_{k}) \right) vecs( B^T\widetilde{P}_{i}B )\Big]
		\nonumber\\
		&=-x_{k}^{T}\otimes x_{k}^{T}vec( Q+\widetilde{K}_{i}^{T}R\widetilde{K}_i ). 
	\end{align}
	Let the positive integer $l$ represent the number of data samples. For any given sequence of vectors $\{ z_k \} _{k=0}^{l}$, we define   
		\begin{align*}
		 & d_{z}=[vecv(z_{0}),\dots,vecv(z_{l-1})]^{T},
		 \nonumber\\
		 &D_{z}=[vecv(z_{1}),\dots,vecv(z_{l})]^{T},\nonumber\\
		&\delta _{ux}=\left[ u_0\otimes x_0,\dots,u_{l-1}\otimes x_{l-1} \right] ^T,
		\nonumber\\
		 &\delta _{xx}=\left[ x_0\otimes x_0,\dots,x_{l-1}\otimes x_{l-1} \right] ^T,\\
		&\varGamma _i=\delta _{xx}vec( Q+\widetilde{K}_{i}^{T}R\widetilde{K}_i ),\theta _i=\big[ \frac{\prod_{j=0}^{i}{c_{j}^{2}}}{b^{2}}D_{x}-d _{x},\\
		&\quad \quad-\frac{2\prod_{j=0}^{i}{c_{j}^{2}}}{b^{2}}(\delta _{xx}\widetilde{K}_{i}^{T}\otimes I_n+\delta _{ux}),		
		\frac{\prod_{j=0}^{i}{c_{j}^{2}}}{b^{2}}(d_{\widetilde{K}_{i}x}-d _{u}) \big], 
		\end{align*}
	where 
	 $\delta _{ux}\in \mathbb{R} ^{l\times mn}$,  $\delta _{xx}\in \mathbb{R} ^{l\times n^2}$, $\varGamma _i\in \mathbb{R} ^l$, $d_{x}\in \mathbb{R} ^{l\times\frac{n\left( n+1 \right)}{2}}$,  $D_{x}\in \mathbb{R} ^{l\times\frac{n\left( n+1 \right)}{2}}$, $d_{\widetilde{K}_{i}x}\in \mathbb{R} ^{l\times\frac{m\left( m+1 \right)}{2}}$,  $d_{u}\in \mathbb{R} ^{l\times\frac{m\left( m+1 \right)}{2}}$, $\theta _i \in \mathbb{R} ^{l\times[\frac{n\left( 1+n \right)}{2}+mn+\frac{m\left( 1+m \right)}{2}]}$, 
	 $ l\geqslant\frac{n\left( 1+n \right)}{2}+mn+\frac{m\left( 1+m \right)}{2}$.
		Let $M_{i}=A^T\widetilde{P}_{i}B$ and $L_{i}=B^T\widetilde{P}_{i}B$,  then  
		according to (\ref{eq14}) we have
		\begin{align}\label{eq15}
		\theta _i\left[ \begin{array}{c}
			vecs( P_{i} )\\
			vec( M_{i} )\\
			vecs( L_{i} )\\
		\end{array} \right] =-\varGamma _i
	\end{align}
	for $i=0,1,2,\ldots$.
		Select a sufficiently large $l$ such that 
	 the following condition is satisfied
	\begin{align}\label{eq16}
		rank\left( [\delta _{xx},\delta _{ux},d _{u}] \right) =\frac{n\left( 1+n \right)}{2}+mn+\frac{m\left( 1+m \right)}{2}.
	\end{align}
	Then, there exists a unique solution to equation (\ref{eq15}), which will be demonstrated later on.
	By employing the least squares approach, we determine the exclusive solution to equation (\ref{eq15}) as follows:
	\begin{align*}
	\left[ \begin{array}{c}
		vecs\left( P_{i} \right)\\
		vec\left( M_{i} \right)\\
		vecs\left( L_{i} \right)\\
	\end{array} \right] =-\left( \theta _{i}^{T}\theta _i \right) ^{-1}\theta _{i}^{T}\varGamma _i. 
\end{align*}

Based on lemma \ref{lemma2}, we achieve the control gain given as
	\begin{align}\label{eq17}
	\widetilde{K}_{i+1}=\Big( L_{i}+\frac{b^{2}}{\prod_{j=0}^{i}{c_{j}^{2}}}R \Big) ^{-1}M_{i}^{T}.
	\end{align}
	
	As can be seen from the derivation process,  parameters $b$ and $c_i$ are involved, which are determined by  requiring the  system knowledge. 
	The method for obtaining these parameters  when the system  matrices are unknown completely will be investigated in detail.
Prior to that, we present the subsequent lemma to establish the uniqueness of the solution to equation (\ref{eq15}).
\begin{lemma}\label{lemma3}
	 If the rank condition (\ref{eq16}) is satisfied,  then equation (\ref{eq15})  has a unique solution.
\end{lemma}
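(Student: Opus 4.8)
The plan is to show that the coefficient matrix $\theta_i$ in (\ref{eq15}) has full column rank; since the sample count $l$ is chosen to satisfy $l\geqslant N$ with $N:=\frac{n(n+1)}{2}+mn+\frac{m(m+1)}{2}$ the number of unknowns, full column rank is exactly what guarantees a unique (least-squares) solution of (\ref{eq15}). I would argue by contradiction: suppose $\theta_i y=0$ for some $y=[vecs(S_1)^{T},\,vec(S_2)^{T},\,vecs(S_3)^{T}]^{T}$ with $S_1=S_1^{T}\in\mathbb{R}^{n\times n}$, $S_2\in\mathbb{R}^{n\times m}$, $S_3=S_3^{T}\in\mathbb{R}^{m\times m}$, and aim to conclude $y=0$. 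Reading off the $k$-th row of $\theta_i y=0$, substituting $x_{k+1}=Ax_k+Bu_k$, and invoking the vectorization identities $vecv(z)^{T}vecs(S)=z^{T}Sz$, $(u_k\otimes x_k)^{T}vec(Y)=x_k^{T}Yu_k$, and $(x_k\otimes x_k)^{T}(\widetilde{K}_i^{T}\otimes I_n)=((\widetilde{K}_ix_k)\otimes x_k)^{T}$, the scalar equation at each sample collapses into a single quadratic form in $(x_k,u_k)$.

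Writing $\alpha:=\frac{\prod_{j=0}^{i}c_j^{2}}{b^{2}}$ and collecting terms, this form reads $x_k^{T}W_1x_k+2\alpha\,x_k^{T}(A^{T}S_1B-S_2)u_k+\alpha\,u_k^{T}(B^{T}S_1B-S_3)u_k=0$ for every $k=0,\dots,l-1$, where after symmetrizing the cross terms $W_1=\alpha(A-B\widetilde{K}_i)^{T}S_1(A-B\widetilde{K}_i)-S_1$. Stacking over the samples and using $x_k^{T}W_1x_k=vecv(x_k)^{T}vecs(W_1)$, this becomes $[\,d_x,\ \delta_{ux},\ d_u\,]\,\eta=0$ with $\eta=[vecs(W_1)^{T},\,2\alpha\,vec(A^{T}S_1B-S_2)^{T},\,\alpha\,vecs(B^{T}S_1B-S_3)^{T}]^{T}$. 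The decisive observation is that $\delta_{xx}$ and $d_x$ span the same column space in $\mathbb{R}^{l}$ (since $x_k^{T}Vx_k$ depends only on the symmetric part of $V$), so $rank([d_x,\delta_{ux},d_u])=rank([\delta_{xx},\delta_{ux},d_u])=N$ by the rank condition (\ref{eq16}). As $[d_x,\delta_{ux},d_u]$ has exactly $N$ columns it is then of full column rank, which forces $\eta=0$.

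From $\eta=0$ I would read off $S_2=A^{T}S_1B$, $S_3=B^{T}S_1B$, and $W_1=0$. Substituting the first two into the expression for $W_1$ telescopes it into the homogeneous discrete Lyapunov equation $\frac{\prod_{j=0}^{i}c_j^{2}}{b^{2}}(A-B\widetilde{K}_i)^{T}S_1(A-B\widetilde{K}_i)-S_1=0$. By Lemma \ref{lemma2}(i) the matrix $\frac{\prod_{j=0}^{i}c_j}{b}(A-B\widetilde{K}_i)$ is Schur stable, so the associated Lyapunov operator is nonsingular and its only solution is $S_1=0$; then $S_2=0$ and $S_3=0$, i.e. $y=0$, contradicting $y\neq0$. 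Hence $\theta_i$ has full column rank and (\ref{eq15}) has a unique solution.

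The step I expect to be the main obstacle is the rank bookkeeping in the second paragraph: because $\delta_{xx}$ carries $n^{2}$ columns but has rank at most $\frac{n(n+1)}{2}$, one cannot feed the naive Kronecker expansion into (\ref{eq16}) directly. The argument must first replace $\delta_{xx}\,vec(W_1)$ by $d_x\,vecs(W_1)$ and justify that this substitution preserves the pertinent column space, so that (\ref{eq16}) can legitimately be applied to the square-width matrix $[d_x,\delta_{ux},d_u]$. The surrounding algebra—symmetrizing the $x_k u_k$ cross term and recognizing the perfect-square $(A-B\widetilde{K}_i)$ structure—is routine, but it must be carried out precisely so that $W_1=0$ lands exactly on the Lyapunov equation whose Schur stability is supplied by Lemma \ref{lemma2}.
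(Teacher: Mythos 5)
Your proof is correct and follows essentially the same route as the paper's: you reduce $\theta_i y=0$ to $[d_x,\delta_{ux},d_u]\,\eta=0$, invoke the rank condition (\ref{eq16}) (making explicit the column-space identification of $\delta_{xx}$ with $d_x$ that the paper leaves implicit) to force $\eta=0$, and then collapse the three blocks to the homogeneous Stein equation whose only symmetric solution is $S_1=0$ by the Schur stability supplied by Lemma~\ref{lemma2}. The one wrinkle is that the $W_1$ you display is already the post-substitution form --- before substituting $S_2=A^{T}S_1B$ and $S_3=B^{T}S_1B$ the $x$-block still carries $S_2$- and $S_3$-dependent terms (the paper's $G$ in (\ref{eq18})) --- but since your final paragraph explicitly performs that substitution, the argument is intact.
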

\begin{proof}
	To prove this result, it suffices to demonstrate that,  for matrices $S=S^{T}\in \mathbb{R}^{n\times n}$, $Y\in \mathbb{R}^{n\times m}$, $W=W^{T}\in \mathbb{R}^{m\times m}$, the following
	matrix equality 
	 \begin{align*}
		\theta _i\left[ \begin{array}{c}
			vecs\left( S \right)\\
			vec\left(Y \right)\\
			vecs\left(W \right)\\
		\end{array} \right] =0.
	\end{align*}
	holds  for   $i=0,1,2,\ldots$, if and only if 
	$S=0$, $Y=0$, and $W=0$.
		It is apparent that
		\begin{align*}
		\theta _i\left[ \begin{array}{c}
			vecs ( S )\\
			vec(Y )\\
			vecs(W )\\
		\end{array} \right] 
		=[d_{x},\delta _{ux},d_{u}]\left[ \begin{array}{c}
			vecs( G)\\
			vec(H )\\
			vecs(Z )\\
		\end{array} \right],
	\end{align*}
	where 
	\begin{align}
		G=&\frac{\prod_{j=0}^{i}{c_{j}^{2}}}{b^{2}}(A_{i}^{T}SA_{i}+A_{i}^{T}SB\widetilde{K}_{i}+\widetilde{K}_{i}^{T}B^{T}SA_{i}+\widetilde{K}_{i}^{T}W\widetilde{K}_{i}
		\nonumber\\
		&+\widetilde{K}_{i}^{T}B^{T}SB\widetilde{K}_{i}-Y\widetilde{K}_{i}-\widetilde{K}_{i}^{T}Y^{T})-S,\label{eq18}\\
		A_{i}=&A-B\widetilde{K}_{i},\nonumber\\
		H=&\frac{2\prod_{j=0}^{i}{c_{j}^{2}}}{b^{2}}(A_{i}^{T}SB+\widetilde{K}_{i}^{T}B^{T}SB-Y),\label{eq19}\\
		Z=&\frac{\prod_{j=0}^{i}{c_{j}^{2}}}{b^{2}}(B^{T}SB-W).\label{eq20}
	\end{align}
	Note that $G^{T}=G$ and $Z^{T}=Z$. 
	 If (\ref{eq16}) is satisfied, it becomes evident that $[d_{x},\delta _{ux},d_{u}]$ has full column rank.  Consequently, we are able to deduce that $G=0, H=0,Z=0$.

Combining  $\frac{\prod_{j=0}^{i}{c_{j}}}{b}>0$ with (\ref{eq19}) and (\ref{eq20}), we have 
	 \begin{align}
		&W=B^{T}SB, \label{eq21}\\
		&Y=A_{i}^{T}SB+\widetilde{K}_{i}^{T}B^{T}SB.\label{eq22}
	 \end{align}
	 	 Put these into (\ref{eq18}), we derive
	 \begin{align}\label{eq23}
		\frac{\prod_{j=0}^{i}{c_{j}^{2}}}{b^{2}}A_{i}^{T}SA_{i}-S=0.
	 \end{align}
	 If $b$ and $c_i$ satisfy the conditions (\ref{eq9}) and (\ref{eq12}) respectively,
	   it can be shown that $\frac{\prod_{j=0}^{i}{c_{j}}}{b}A_i$ is Schur stable for   $i=0,1,2,\ldots$. Moreover, the only solution to the equation (\ref{eq23}) is $S=0$. Subsequently, it follows from the equations (\ref{eq21}) and (\ref{eq22}) that $Y=0$ and $W=0$. This concludes the proof.
\end{proof}

	\begin{rmk}	
        To solve (\ref{eq15}), the probing noise $e$ must be incorporated into the control input and collect system data for $l$ moments,  with $l\geqslant\frac{n\left( 1+n \right)}{2}+mn+\frac{m\left( 1+m \right)}{2}$,  to ensure that the rank condition (\ref{eq16}) is satisfied \cite{zero_sum_1_DT}.  		
	\end{rmk}

  \subsection{Selection for $b$}


One possible way to determine the value of parameter $b$ in situations where the system dynamics are completely unknown is to gradually increase $b$ until condition  (\ref{eq9}) 
 is satisfied for a given  $\widetilde{K}_{0}$. In the following, we propose an evaluation criterion that can be used to verify whether $b$ satisfies condition (\ref{eq9}),  which is discussed in detail below.

We first select $\widetilde{K}_0$ and $b\geqslant 1$ arbitrarily, let $c_{0}=1$. Then, when the rank condition (\ref{eq16}) is satisfied, we can solve the matrix $\widetilde{P}_0$ uniquely from  equation (\ref{eq15}) with $i=0$.
If the positive definiteness of the matrix $\widetilde{P}_0$ is established, it implies that the system $(\frac{1}{b}A,\frac{1}{b}B)$ is stabilized by $\widetilde{K}_0$, and thus a constant $b$ satisfying  (\ref{eq9}) has been identified.
Otherwise, we recalculate $\widetilde{P}_0$ by replacing $b$ in equation (\ref{eq15}) with $b+\delta$, where  $\delta>0 $ is defined as a step size.  That is, we let
	\begin{align}\label{eq25}
	b \leftarrow b+\delta,
\end{align}
and repeat the above steps until we obtain a positive definite  $\widetilde{P}_0$.
	\subsection{Selection for  $c_{i}$}
	To implement our model-free SPI algorithm, we also need to provide a method for determining the scaling factor  $c_{i}$ for $i=1,2,\ldots$. Since the system dynamics are unknown, we can only rely on the collected samples to achieve this goal.
	
The following theorem indicates that when we follow the updating law (\ref{eq26}) to determine the scaling factor,
we then can achieve a stable control gain matrix  capable of stabilizing the corresponding scaled system. 
That is, the updating law (\ref{eq26})  can be used in (\ref{eq15}) and (\ref{eq17}) to perform the learning process to achieve the  
stabilizing control gain matrix for system (\ref{eq1}).

\begin{theorem}\label{theorem_model_free_c}
	Let the scaling factor  $c_{i+1}$ satisfy
\begin{align}\label{eq26}
	\begin{cases}
		c_{i+1}=1,&		if\,\,\mathcal{Q}_{i}\,\,is\,\,non-invertible\\
		1<c_{i+1}<\sigma (\widetilde{P}_{i}\mathcal{Q}_{i}^{-1})^{1/2},&		if \,\,\mathcal{Q}_{i} \,\,is\,\,  invertible\\
			\end{cases}
	\end{align}
	where $\mathcal{Q}_{i}=\widetilde{P}_{i}-Q-\widetilde{K}_{i+1}^{T}R\widetilde{K}_{i+1}$. 
	Then, using the control gain 
	$\widetilde{K}_{i+1}$ obtained from (\ref{eq11}), a unique positive definite solution $\widetilde{P}_{i+1}$ to the Lyapunov equation given by (\ref{eq10}) is guaranteed in the iteration step $i+1$ for $i=0,1,2,\ldots$.
\end{theorem}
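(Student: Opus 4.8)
The plan is to reduce the claim to a single \emph{Schur stability} statement and then verify that statement using the Lyapunov function already constructed in Lemma~\ref{lemma2}. By exactly the discrete-time Lyapunov argument invoked in Lemma~\ref{lemma2} (with Assumption~1 guaranteeing positive definiteness), the equation (\ref{eq10}) at step $i+1$ admits a unique positive definite solution $\widetilde{P}_{i+1}$ if and only if the scaled closed-loop matrix
\[
\Phi_{i+1}:=\frac{\prod_{j=0}^{i+1}{c_j}}{b}(A-B\widetilde{K}_{i+1})=c_{i+1}\,\Psi_{i+1},\qquad \Psi_{i+1}:=\frac{\prod_{j=0}^{i}{c_j}}{b}(A-B\widetilde{K}_{i+1}),
\]
is Schur stable. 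Hence it suffices to show that, with $\widetilde{K}_{i+1}$ taken from (\ref{eq11}), the scaling factor selected by (\ref{eq26}) renders $\Phi_{i+1}$ Schur stable.

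First I would recall the identity produced in the proof of Lemma~\ref{lemma2}: taking $V_i(x)=x^T\widetilde{P}_i x$ as a candidate Lyapunov function for $\Psi_{i+1}$ gives $\Psi_{i+1}^T\widetilde{P}_i\Psi_{i+1}-\widetilde{P}_i=-\mathcal{R}_i$, where $\mathcal{R}_i=Q+(\widetilde{K}_i-\widetilde{K}_{i+1})^T(\tfrac{\prod_{j=0}^{i}c_j^2}{b^2}B^T\widetilde{P}_iB+R)(\widetilde{K}_i-\widetilde{K}_{i+1})+\widetilde{K}_{i+1}^TR\widetilde{K}_{i+1}$. Discarding the positive semidefinite cross term yields the key upper bound $\Psi_{i+1}^T\widetilde{P}_i\Psi_{i+1}=\widetilde{P}_i-\mathcal{R}_i\leqslant \widetilde{P}_i-Q-\widetilde{K}_{i+1}^TR\widetilde{K}_{i+1}=\mathcal{Q}_i$, and in particular $\mathcal{Q}_i\geqslant 0$. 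Evaluating the Lyapunov difference for the scaled matrix $\Phi_{i+1}=c_{i+1}\Psi_{i+1}$ then gives $\Phi_{i+1}^T\widetilde{P}_i\Phi_{i+1}-\widetilde{P}_i=c_{i+1}^2(\widetilde{P}_i-\mathcal{R}_i)-\widetilde{P}_i\leqslant c_{i+1}^2\mathcal{Q}_i-\widetilde{P}_i$, so $V_i$ certifies Schur stability of $\Phi_{i+1}$ as soon as $c_{i+1}^2\mathcal{Q}_i<\widetilde{P}_i$. This reduction of the model-free Schur condition to a bound involving only the data-computable matrices $\widetilde{P}_i$ and $\mathcal{Q}_i$ is the conceptual heart of the argument.

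Finally I would split into the two branches of (\ref{eq26}). If $\mathcal{Q}_i$ is non-invertible, then $c_{i+1}=1$ and $\Phi_{i+1}=\Psi_{i+1}$, which is already Schur stable from the strict inequality $\Psi_{i+1}^T\widetilde{P}_i\Psi_{i+1}-\widetilde{P}_i=-\mathcal{R}_i<0$. If $\mathcal{Q}_i$ is invertible, then combining invertibility with $\mathcal{Q}_i\geqslant 0$ gives $\mathcal{Q}_i>0$, and the target inequality is equivalent to $c_{i+1}^2<\lambda_{\min}(\widetilde{P}_i\mathcal{Q}_i^{-1})$. The main obstacle is that the exact threshold $\lambda_{\min}(\widetilde{P}_i\mathcal{Q}_i^{-1})$ is \emph{not} the quantity appearing in (\ref{eq26}); I would bridge this gap by noting that $\widetilde{P}_i\mathcal{Q}_i^{-1}$ is similar to the symmetric positive definite matrix $\mathcal{Q}_i^{-1/2}\widetilde{P}_i\mathcal{Q}_i^{-1/2}$, so its eigenvalues are real and positive, and for any eigenpair $(\lambda,v)$ one has $|\lambda|=\|(\widetilde{P}_i\mathcal{Q}_i^{-1})v\|/\|v\|\geqslant \sigma(\widetilde{P}_i\mathcal{Q}_i^{-1})$, whence $\sigma(\widetilde{P}_i\mathcal{Q}_i^{-1})\leqslant \lambda_{\min}(\widetilde{P}_i\mathcal{Q}_i^{-1})$. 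Therefore the condition $c_{i+1}<\sigma(\widetilde{P}_i\mathcal{Q}_i^{-1})^{1/2}$ from (\ref{eq26}) forces $c_{i+1}^2<\lambda_{\min}(\widetilde{P}_i\mathcal{Q}_i^{-1})$, hence $c_{i+1}^2\mathcal{Q}_i<\widetilde{P}_i$. In either branch $\Phi_{i+1}$ is Schur stable, and by the reduction of the first paragraph the Lyapunov equation (\ref{eq10}) at step $i+1$ possesses a unique positive definite solution $\widetilde{P}_{i+1}$, completing the proof.
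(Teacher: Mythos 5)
Your proof is correct and reaches the paper's conclusion through the same essential mechanism: reduce the claim to Schur stability of the rescaled closed loop $c_{i+1}\Psi_{i+1}$, bound the quadratic Lyapunov decrement by $c_{i+1}^2\mathcal{Q}_i-\widetilde{P}_i$, and close the argument with the observation that the minimum singular value of $\widetilde{P}_i\mathcal{Q}_i^{-1}$ lower-bounds its (real, positive) eigenvalues, so that $c_{i+1}^2<\sigma(\widetilde{P}_i\mathcal{Q}_i^{-1})$ forces $c_{i+1}^2\mathcal{Q}_i<\widetilde{P}_i$. The one structural difference is the choice of Lyapunov certificate: the paper introduces the auxiliary matrix $\bar{P}_{i+1}$ solving the un-rescaled Lyapunov equation (\ref{eq27}) and invokes the monotonicity $\bar{P}_{i+1}\leqslant\widetilde{P}_i$ from Lemma \ref{lemma1}, whereas you certify stability directly with $\widetilde{P}_i$ by recycling the decrement identity $\Psi_{i+1}^T\widetilde{P}_i\Psi_{i+1}-\widetilde{P}_i=-\mathcal{R}_i$ already computed in the proof of Lemma \ref{lemma2}. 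Your route is slightly more self-contained (no appeal to Hewer's monotonicity, and it makes $\mathcal{Q}_i\geqslant 0$ explicit, a fact the paper uses only implicitly when it multiplies $\mathcal{Q}_i$ by scalars inside Loewner inequalities); the paper's route avoids re-deriving the decrement identity and hands the non-invertible case directly to Lemma \ref{lemma1}. Both arguments share the same mild gap at the very last step of the stability certificate: strict negativity of the decrement relies on $Q\geqslant 0$ plus the observability assumption rather than $Q>0$, exactly as in the paper's own proof of Lemma \ref{lemma2}, so you are no less rigorous than the source.
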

\begin{proof}
	Given $b$ satisfies (\ref{eq9}) obtained by iteratively solving  (\ref{eq15}) with $i=0$ and $c_{0}=1$, and implementing (\ref{eq25}) until $\widetilde{P}_0>0$. 
	From (\ref{eq10}), (\ref{eq11}), and the proof of Lemma \ref{lemma2},
	 it can be deduced that  $\frac{1}{b}\prod_{j=0}^{i}{c_j}(A-B\widetilde{K}_{i+1})$ is Schur stable. 
	Therefore, by solving the Lyapunov equation below, we can derive the unique positive definite solution $\bar{P}_{i+1}$ for the equation
		\begin{align}\label{eq27}
	&\frac{1}{b^2}\prod_{j=0}^{i}{c^{2}_j}( A-B\widetilde{K}_{i+1}) ^{T}\bar{P}_{i+1}( A-B\widetilde{K}_{i+1}) -\bar{P}_{i+1}+Q\nonumber\\
	&+\widetilde{K}_{i+1}^{T}R\widetilde{K}_{i+1}=0. 
	\end{align}
According to Lemma {\ref{lemma1}}, we have 
	\begin{align}\label{P}
		0<\bar{P}_{i+1}\leq\widetilde{P}_{i},
	\end{align}
where $\widetilde{P}_{i}$ is solved from (\ref{eq10}).

Next, we will categorically discuss the stability of  $\frac{1}{b}\prod_{j=0}^{i+1}{c_j}\\
(A-B\widetilde{K}_{i+1})$ based on the reversibility of $\mathcal{Q}_{i}$.
\\ 
\textbf{Case 1:} 
$\mathcal{Q}_{i}$ is non-invertible

	Let $c_{i+1}=1$, according to the Lemma \ref{lemma1},  $\frac{1}{b}\prod_{j=0}^{i+1}{c_j}(A-B\widetilde{K}_{i+1})$ is also Schur stable.\\
\textbf{Case 2:}
$\mathcal{Q}_{i}$ is invertible

It follows from (\ref{eq26}), (\ref{eq27}) and (\ref{P}) that		
	\begin{align*}
		&\frac{1}{b^2}\prod_{j=0}^{i+1}{c^{2}_j}( A-B\widetilde{K}_{i+1}) ^{T}\bar{P}_{i+1}( A-B\widetilde{K}_{i+1}) -\bar{P}_{i+1}\\
		=&c^{2}_{i+1}\Big(\frac{1}{b^2}\prod_{j=0}^{\textcolor{blue}{i}}{c^{2}_j}( A-B\widetilde{K}_{i+1}) ^{T}\bar{P}_{i+1}( A-B\widetilde{K}_{i+1}) -\bar{P}_{i+1}\Big)
		\\
		&-(1-c^{2}_{i+1})\bar{P}_{i+1}\\
        =&-c^{2}_{i+1}(Q+\widetilde{K}_{i+1}^{T}R\widetilde{K}_{i+1})-(1-c^{2}_{i+1})\bar{P}_{i+1}\\
		\leqslant &-c^{2}_{i+1}(Q+\widetilde{K}_{i+1}^{T}R\widetilde{K}_{i+1})-(1-c^{2}_{i+1})\widetilde{P}_{i}\\
		=&c^{2}_{i+1}(\widetilde{P}_{i}-Q-\widetilde{K}_{i+1}^{T}R\widetilde{K}_{i+1})-\widetilde{P}_{i}\\
		<&\sigma\big(\widetilde{P}_{i} \big(\widetilde{P}_{i}-Q-\widetilde{K}_{i+1}^{T}R\widetilde{K}_{i+1}\big)^{-1}\big)I(\widetilde{P}_{i}-Q-\widetilde{K}_{i+1}^{T}R\widetilde{K}_{i+1})-\widetilde{P}_{i}\\
		\leqslant&\widetilde{P}_{i} \big(\widetilde{P}_{i}-Q-\widetilde{K}_{i+1}^{T}R\widetilde{K}_{i+1}\big)^{-1}\big(\widetilde{P}_{i}-Q-\widetilde{K}_{i+1}^{T}R\widetilde{K}_{i+1}\big)-\widetilde{P}_{i}\\
		=&0,
	\end{align*}
	which reveals that $\frac{1}{b}\prod_{j=0}^{i+1}{c_j}(A-B\widetilde{K}_{i+1})$ is Schur stable. 

	Thus, a unique  positive definite solution $\widetilde{P}_{i+1}$ is established for equation (\ref{eq10}) at the $(i+1)$-th iteration step. The proof is  concluded.
\end{proof}
	As demonstrated in Theorem \ref{theorem_model_free_c}, it is clear that \( c_{i+1} \geq 1 \), and therefore, \( \frac{\prod_{j=0}^i c_j}{b} \) is monotonically non-decreasing as \( i \) increases. Moreover, since \( \mathcal{Q}_{i} \) is not always irreversible, the value of \( c_{i+1} \) does not remain fixed at 1. Consequently, we can conclude that after a finite number of learning iterations, there exists a constant \( \hat{i} \) such that \( \frac{\prod_{j=0}^{\hat{i}} c_j}{b} \geq 1 \).
\begin{algorithm}[!h]
    \caption{Model-Free SPI Algorithm }\label{alg:2}
    \renewcommand{\algorithmicrequire}{\textbf{Initialize:}}
    \begin{algorithmic}[1]
        \REQUIRE Choose $\delta>0$, $\mathcal{E}>0$, $b\geqslant1$. Give iterative index $i=0$, $i_{max}$, set $\widetilde{P}_{0}=-I$, $c_{0}=1$. 
			Employ a measurable locally essentially bounded  control input $u_k=\widetilde{K}_0x_k+e$ to system (\ref{eq1}), where $\widetilde{K}_{0}$ is a arbitrary given control gain, $e$ denotes the exploration noise.
			\renewcommand{\algorithmicrequire}{\textbf{Data Collection:}}
           \REQUIRE Collect the online data of system state $x_{k}$ and input $u_{k}$, for $k=0,2,\cdots, l$, compute $\delta _{ux},\delta _{xx}, d_x,D_x,d_u$, 
		   where $l$ is such that (\ref{eq16})  holds.
		\renewcommand{\algorithmicrequire}{\textbf{Iterative learning:}}
		\REQUIRE 
		\WHILE{$i<i_{max}$}	
		\IF {$ \widetilde{P}_{0}> 0$}
\STATE Compute $\theta _{i}$ and $\varGamma _i$.
\STATE Solve $\widetilde{P}_{i}$ from (\ref{eq15}) and  update $\widetilde{K}_{i+1}$ by (\ref{eq17}).
\STATE Determine $c_{i+1}$.
\STATE $i \gets i+1$.
		\ELSE
		\STATE $b \gets b+\delta $ 
		\STATE Compute $\theta _{0}$ and $\varGamma _0$.
    \STATE Solve $\widetilde{P}_{0}$ from (\ref{eq15}).
    			\ENDIF
		\IF {$ \frac{\prod_{j=0}^{i}{c_j}}{b}\geqslant1$}
		\STATE  \textbf{break.} 
		\ENDIF
			\ENDWHILE 
	\WHILE{$i<i_{max}$}	   
	\STATE $b \gets1$, $\prod_{j=0}^{i}{c_j} \gets1$.
	\STATE Solve $\widetilde{P}_{i}$ from (\ref{eq15}) and  update $\widetilde{K}_{i+1}$ by (\ref{eq17}).
		\IF { $||\widetilde{P}_{i}-\widetilde{P}_{i-1}||<\mathcal{E}$}
	\STATE  \textbf{break} 
	\ENDIF
	\STATE $i\gets i+1$.
	\ENDWHILE
	\RETURN $P^{*}\gets \widetilde{P}_{i};K^{*}\gets  \widetilde{K}_{i+1}$.
           \end{algorithmic}
\end{algorithm}
\subsection{SPI algorithm synthesis}
Based on the above derivation, we can now develop the following model-free SPI algorithm, as listed in Algorithm \ref{alg:2}.

As shown in Algorithm \ref{alg:2}, compared to the conventional model-free PI algorithm, the proposed SPI algorithm has the advantage of iteratively solving for the optimal control gain matrix using only system dynamics data, without the need for a predetermined stable initial control gain matrix.  
The subsequent theorem ensures the convergence of the developed model-free SPI algorithm.
\begin{theorem}
Sequences $\{ \widetilde{P}_i \} $ and $\{ \widetilde{K}_i \}$  learned by the model-free SPI  algorithm  converge to $P^{*}$ and $K^{*}$, respectively.
\begin{proof}
When condition (\ref{eq16}) is met, one can ensure that (\ref{eq15}) has a unique solution  based on Lemma \ref{lemma3}. 
 Consequently, in the first loop of Algorithm 2, by repeatedly executing Steps 8-10, a positive definite matrix $\widetilde{P}_{0}$ is obtained, indicating that a $b$ satisfying (\ref{eq9}) has been found. Then, by iterating through Steps 3-6, a stable control gain matrix $\widetilde{K}_{\hat{i}}$ can be derived.
 Let $ \frac{\prod_{j=0}^{i}{c_j}}{b}=1$ for $i=\hat{i}, \hat{i}+1, \hat{i}+2, \dots$. In this case, based on the definitions of $M_i$ and $L_i$, it can be inferred that the sequences $\{ \widetilde{P}_i \}$ and $\{ \widetilde{K}_i \} $ obtained through Steps 16-23 of Algorithm \ref{alg:2} are equivalent to those derived from the model-based PI algorithm described in Lemma \ref{lemma1}, given the same initial stabilizing control gain $\widetilde{K}_{\hat{i}}$.
  Therefore, we can conclude that the convergence of the sequences $\{ \widetilde{P}_i \}$ and $\{ \widetilde{K}_i \}$ obtained through Algorithm \ref{alg:2} is guaranteed. This completes the proof.
     \end{proof}
\end{theorem}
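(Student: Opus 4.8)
The plan is to reduce the behaviour of the model-free recursion to the already-established model-based results by showing that, under the persistency-of-excitation requirement, the data-driven computations reproduce exactly the model-based iterates. First I would invoke Lemma \ref{lemma3}: once a sample length $l$ is chosen so that the rank condition (\ref{eq16}) holds, equation (\ref{eq15}) has a unique solution, and because the triple $(\widetilde{P}_i, M_i, L_i)$ built from the true matrices $A, B$ satisfies (\ref{eq15}) by construction, the least-squares solution computed in Algorithm \ref{alg:2} must coincide with $\widetilde{P}_i = \widetilde{P}_i^T > 0$, $M_i = A^T \widetilde{P}_i B$ and $L_i = B^T \widetilde{P}_i B$. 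Consequently the gain update (\ref{eq17}) is identical to the model-based update (\ref{eq11}), and every statement proved for the model-based iterates transfers directly to the model-free ones.

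With this equivalence in hand, I would then track the two loops of Algorithm \ref{alg:2} separately. For the first (scaling) loop I would appeal to the $b$-selection argument: starting from an arbitrary $\widetilde{K}_0$ and $b \geq 1$, repeatedly applying (\ref{eq25}) and re-solving (\ref{eq15}) with $i=0$ yields a positive definite $\widetilde{P}_0$ in finitely many steps, which certifies that the chosen $b$ satisfies (\ref{eq9}). Next, Theorem \ref{theorem_model_free_c} guarantees that the scaling factors produced by the data-driven law (\ref{eq26}) keep each scaled closed-loop matrix $\tfrac{\prod_{j=0}^i c_j}{b}(A - B\widetilde{K}_{i+1})$ Schur stable, while the accompanying monotonicity remark shows $c_{i+1} \geq 1$ with $c_{i+1}$ not permanently equal to $1$, so the cumulative factor $\tfrac{\prod_{j=0}^i c_j}{b}$ grows without bound and there exists a finite $\hat{i}$ with $\tfrac{\prod_{j=0}^{\hat{i}} c_j}{b} \geq 1$. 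Combining Schur stability of $\tfrac{\prod_{j=0}^{\hat{i}} c_j}{b}(A - B\widetilde{K}_{\hat{i}})$ with $\tfrac{\prod_{j=0}^{\hat{i}} c_j}{b} \geq 1$ forces $\rho(A - B\widetilde{K}_{\hat{i}}) < 1$, i.e.\ $\widetilde{K}_{\hat{i}}$ stabilizes the original system (\ref{eq1}).

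Finally, for the second loop I would set $b \gets 1$ and $\prod_{j=0}^i c_j \gets 1$, so that (\ref{eq10}) collapses to the Lyapunov equation (\ref{eq7}) and (\ref{eq17}) collapses to (\ref{eq8}). Starting from the stabilizing gain $\widetilde{K}_{\hat{i}}$, the model-free recursion is therefore exactly the iteration of Lemma \ref{lemma1}, whose conclusion (iii) gives $\lim_{i\to\infty} \widetilde{P}_i = P^*$ and $\lim_{i\to\infty}\widetilde{K}_i = K^*$. I expect the main obstacle to be the first paragraph: rigorously justifying that the unique least-squares solution of the data equation (\ref{eq15}) is the genuine Lyapunov/Riccati data $(\widetilde{P}_i, A^T\widetilde{P}_i B, B^T\widetilde{P}_i B)$ for every $i$, and that the hand-off from the scaling loop to the pure policy-iteration loop preserves both positive definiteness of $\widetilde{P}_{\hat{i}}$ and the stabilizing property of $\widetilde{K}_{\hat{i}}$; everything downstream then follows from the cited results.
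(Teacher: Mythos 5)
Your proposal is correct and follows essentially the same route as the paper's own proof: Lemma \ref{lemma3} guarantees that the data equation (\ref{eq15}) uniquely recovers the model-based triple $(\widetilde{P}_i, A^T\widetilde{P}_iB, B^T\widetilde{P}_iB)$, the $b$-search and Theorem \ref{theorem_model_free_c} handle the scaling loop until a stabilizing $\widetilde{K}_{\hat{i}}$ is reached, and the second loop then reduces to the Hewer iteration of Lemma \ref{lemma1}, whose convergence yields the claim. Your write-up is in fact more explicit than the paper's (notably in justifying why the least-squares solution equals the true Lyapunov data and why $\tfrac{\prod_{j=0}^{\hat{i}}c_j}{b}\geqslant 1$ forces $\rho(A-B\widetilde{K}_{\hat{i}})<1$), but the decomposition and the key lemmas invoked are the same.
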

\begin{rmk}
		Unlike Algorithm \ref{alg:1}, Algorithm \ref{alg:2} relies entirely on system sample data to determine the control gain matrix. For any given $\tilde{K}_{0}$, after finding an appropriate $b$, the parameters $B^T\widetilde{P}_{i}B$ and $B^T\widetilde{P}_{i}A$ (i.e., $M_i$ and $L_i$) required to update $\tilde{K}_{i+1}$ are solved using (\ref{eq15}), and then substituted into (\ref{eq17}) to obtain $\tilde{K}_{i+1}$, completely eliminating the need for knowledge of matrices $A$ and $B$.	
\end{rmk}
\begin{rmk} \label{Remark_comparison}
  The merits of model-free  SPI algorithm proposed in this paper compared to some of the existing works are as follows. In comparison to PI methods such as those proposed in \cite{ zero_sum_1_DT,lai2023model}, our algorithm eliminates the necessity for an initial stabilizing control policy. Unlike the GPI in \cite{ZhouGpI}, the execution of our model-free SPI algorithm circumvents the requirement for the maximum eigenvalue of the optimal value matrix $P^{*}$. Obtaining this eigenvalue is challenging without complete knowledge of the system matrices. Diverging from approaches in \cite{lamperski2020computing,lai2023learning} which need for a search procedure for the discount factor, our model-free SPI algorithm only requires a single iteration to determine the scaling factor that satisfies the specified condition. 
Compared to the LMI method in \cite{de2019formulas} and the deadbeat control gain matrix design method in \cite{van2020data, RL_2}, the approach in this paper offers greater versatility. The scaling technique can be directly extended to DT linear systems where \(P_i\) exhibits non-decreasing monotonicity during PI. In contrast, the methods in \cite{de2019formulas} and \cite{van2020data} cannot be directly applied to different linear systems, such as the stochastic systems with multiplicative noise discussed in \cite{wang2016infinite}, is challenging and requires further research.


\end{rmk}

\section{Simulation results}\label{Sec_5}
	In this section,  a numerical example is presented to evaluate  the proposed algorithm. We
	consider a power system as described in \cite{2016Asymptotically},  which takes the form of $	\dot{x}\left( t \right) =A_cx\left( t \right) +B_cu\left( t \right) ,$
	where 
\begin{align*}
	&	x=[\Delta \bar{\alpha},\Delta P_{m},\Delta f_{G}]^{T},~~~u=\Delta P_{c},\\
	&A_{c}=\left[\begin{array}{ccc}
		-\frac{1}{T_{g}} & 0 & \frac{1}{R_{g} T_{g}} \\
		\frac{K_{t}}{T_{t}} & -\frac{1}{T_{t}} & 0 \\
		0 & \frac{K_{p}}{T_{p}} & -\frac{1}{T_{p}}
		\end{array}\right],~~~
		B_{c}=\left[\begin{array}{c}
			\frac{1}{T_{g}} \\
			0 \\
			0
			\end{array}\right].
	\end{align*}
	In this power system, $\Delta \bar{\alpha}$ represents the incremental change in the position of the governor value,  $\Delta P_{m}$ represents the incremental change in the output of the generator.
	 $\Delta f_{G}$ denotes the incremental frequency deviation, $\Delta P_{c}$ denotes the incremental change in speed for position deviation,  $T_{g}$ and $T_{t}$ represent the governor time and turbine time, respectively. $T_{p}$ is the generator model time, and $R_{g}$ is the feedback regulation constant, $K_{p}$ and $K_{t}$ denote the gain constant of generator model and turbine model, respectively.
	Similar to \cite{2016Asymptotically}, we select $T_{g}=0.08 s$, $T_{t}=0.1 s$, $T_{p}=20 s$, $R_{g}=2.5 Hz/MW$, $K_{p}=120 Hz/MW$, $K_{t}=1 s$. 
	
	Discretizing the above system  by the zero-order hold method with the sampling interval $T=0.01 s$ leads to 
	\begin{align}\label{eq29}
	x_{k+1}=Ax_{k}+Bu_{k},
	\end{align}
	with
	\begin{align*}
		A=\left[ \begin{matrix}
			0.8825&		0.0014&		0.0470\\
			0.0894&		0.9049&		0.0023\\
			0.0028&		0.0571&		0.9995\\
		\end{matrix} \right],
		B=\left[ \begin{array}{c}
			0.0001\\
			0.1190\\
			0.0036\\
		\end{array} \right]. 
	\end{align*}
	Select matrices $Q=I_{3}$ and $R=1$  for (\ref{eq2}).
To demonstrate the effectiveness of SPI algorithm proposed in this paper, we select $\widetilde{K}_{0}=0$  as the starting control gain  of our proposed algorithm, and  set the convergence tolerance to be $\mathcal{E}=10^{-5}$. 

In this scenario,  the eigenvalues of  matrix $(A-B\widetilde{K}_{0})$ are $0.8847 \pm 0.0405i, 1.0176$,  it is evident that the initial control gain $\widetilde{K}_{0}$ is  unstable.  Using our proposed algorithm,  we can see that, after $10$ iterations by Algorithm \ref{alg:1},  the optimal value  matrix $P^{*}$ and control gain $K^{*}$ have been achieved as follows:
\begin{align*}
	P^{*}=&\left[ \begin{matrix}
		6.4599&		\,\,3.2440&		6.3364\\
		\,\,3.2440&		7.6499&		\,\, 10.1346\\
		6.3364&		\,\, 10.1346&		\,\,33.5195\\
	\end{matrix} \right] ,\\
	K^{*}=&\left[ \begin{matrix}
		0.4022&		\,\,0.8351&		\,\,1.2066\\
	\end{matrix} \right],
\end{align*}
which are consistent with the results obtained by the conventional model-based  PI algorithm based on Lemma \ref{lemma1}.
The convergences of $\widetilde{P}_{i}$ and $\widetilde{K}_{i}$ during this process are depicted in Fig.\ref{fig1}. 

Now, assuming a complete lack of information regarding the dynamical system matrices, we employ  the model free SPI algorithm to solve the optimal control problem.  
Set the initial state as $x_{0}=[0.1,0.1,0.2]^T$, apply the control input $u_{k}=\sum_{h=1}^{100}{\sin \left( \omega _{h}k \right)}$ to the system (\ref{eq29}), and collect the system data  $\delta _{ux},\delta _{xx}, d_x,D_x,d_u$ and system state for $k=0,1,2,\dots,30$, where $\omega _h$ is a random number uniformly distributed in the range $[-10,10]$.
Subsequently, the collected system data is repeatedly utilized to compute $\theta _i$ and $\varGamma _i$, thereby updating  (\ref{eq15}) and implementing the iterative  process of Algorithm \ref{alg:2}. 
We set $\delta=0.1$ and $b=1$, then the optimal value function matrix $P^{*}$ and control gain $K^{*}$ are acquired after 9 iterations using   model-free SPI Algorithm \ref{alg:2}. The convergence trajectories of $\widetilde{P}_{i}$ and $\widetilde{K}_{i}$ throughout this process are illustrated in Fig.\ref{fig2}. 
\begin{figure}[htbp]
	\centering
	\begin{subfigure}{0.48\linewidth}
	  \centering
	  \includegraphics[width=1\linewidth]{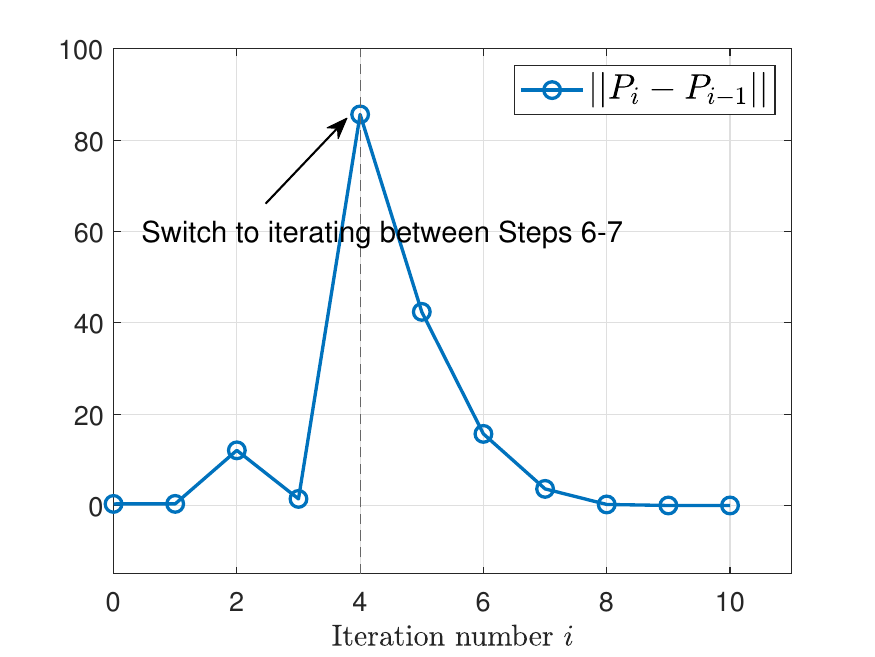}
	  \caption{}
	  \label{fig1_1}
	\end{subfigure}
	\begin{subfigure}{0.48\linewidth}
	  \centering
	  \includegraphics[width=1\linewidth]{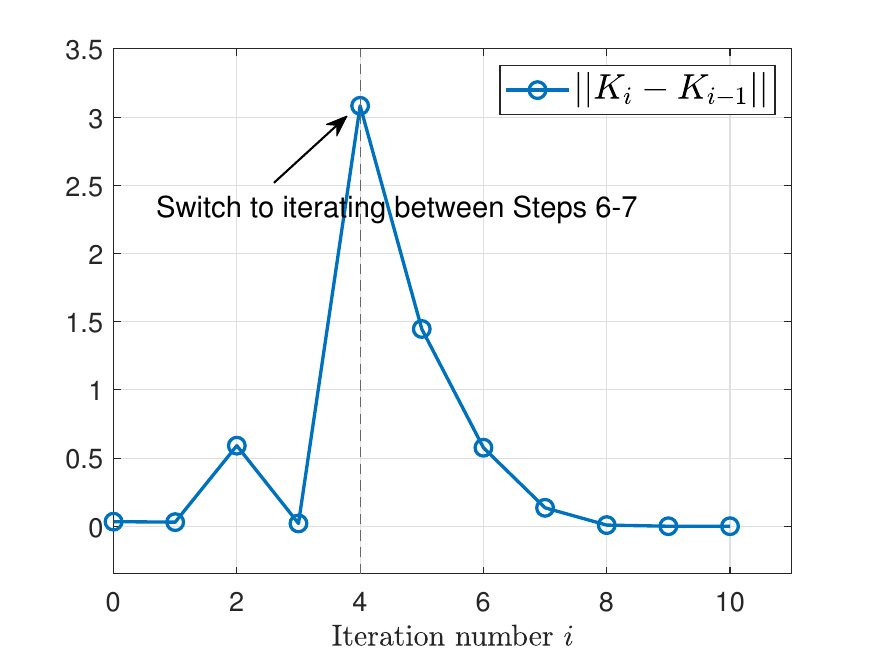}
	  \caption{}
	  \label{fig1_2}
	\end{subfigure}	
		\caption{Convergences of $\widetilde{P}_{i}$ and $\widetilde{K}_{i}$ in Algorithm \ref{alg:1}
		}
	\label{fig1}
\end{figure}
\begin{figure}[htbp]
	\centering
	\begin{subfigure}{0.48\linewidth}
	  \centering
	  \includegraphics[width=1\linewidth]{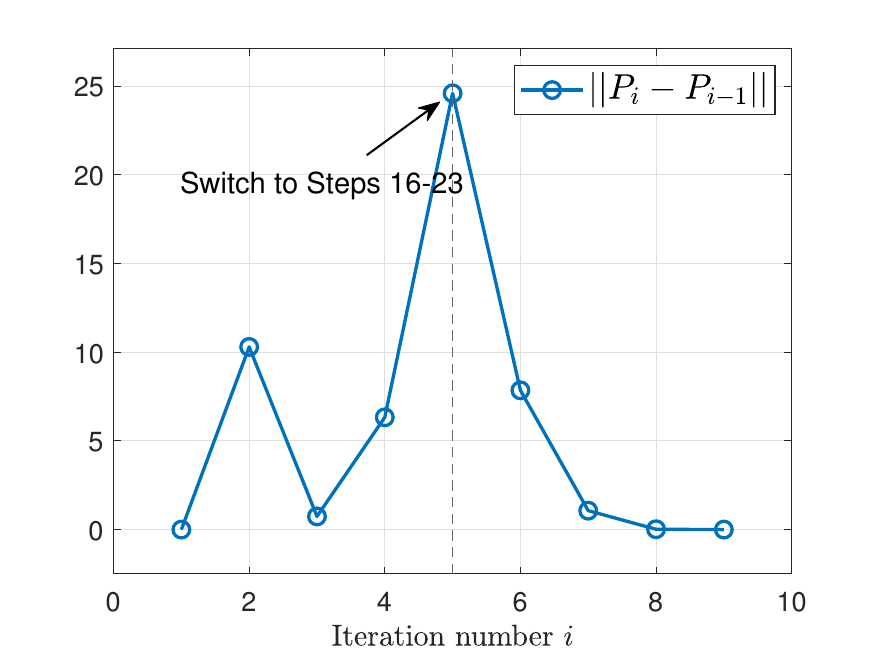}
	  \caption{}
	  \label{fig2_1}
	\end{subfigure}
	\begin{subfigure}{0.48\linewidth}
	  \centering
	  \includegraphics[width=1\linewidth]{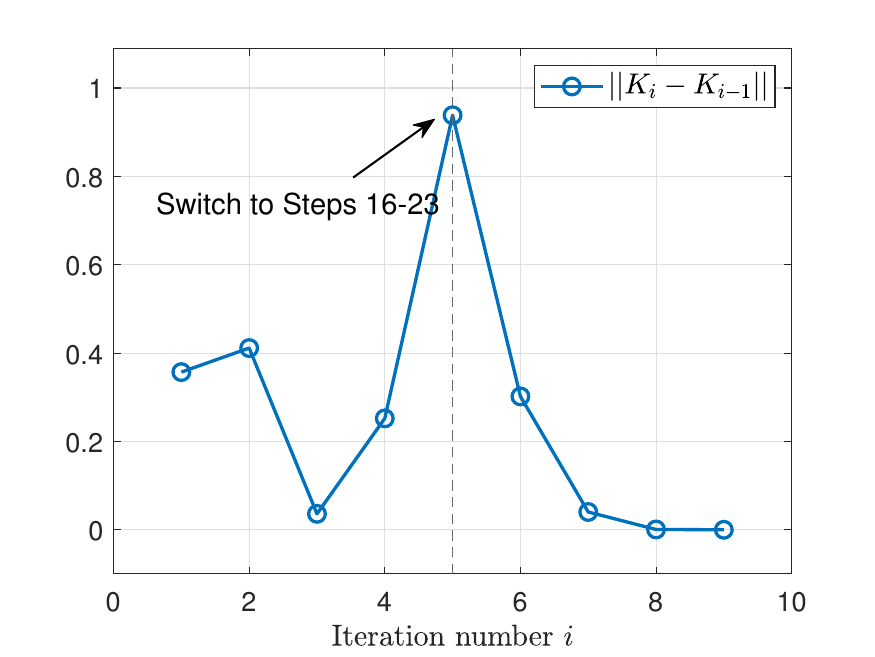}
	  \caption{}
	  \label{fig2_2}
	\end{subfigure}	
		\caption{Convergences of $\widetilde{P}_{i}$ and $\widetilde{K}_{i}$ in Algorithm \ref{alg:2}}
	\label{fig2}
\end{figure}

As mentioned before, the merit of the  proposed SPI algorithm over the conventional PI is that, instead of  requiring an initial stable control gain, the proposed SPI  algorithm can generate a stable control gain based on any given initial control gain. 
To highlight this advantage, we offer a detailed exposition of the process for obtaining the stable control gain in the algorithm.


  In Algorithm 1, with $ b = 2.0176 $, the changes in the relevant parameter values during the iterative process between Steps 3 and 4 are summarized in Table \ref{table01}. As shown in the table, the stopping condition \(\frac{\prod_{j=0}^{i}{c_{j}}}{b}\geqslant1\) is met after $4$ iterations, and the stable control gain $\widetilde{K}_{4}=[0.1829, 0.4622, 0.3963]$ is obtained with $\rho(A-B\widetilde{K}_4)=0.9928 < 1$. Furthermore, Table \ref{table01} also indicates that in each iteration, the parameter $ c_{i} $ satisfies (\ref{eq12}), and $\rho (\frac{\prod_{j=0}^{i}{c_{j}}}{b}A_{i})$ remains Schur stable. Subsequently, the iterative process between Steps 6 and 7 in Algorithm \ref{alg:1} is performed, and as shown in Fig.\ref{fig1}, after 6 iterations of learning, the optimal solution $ P^{*} $ is obtained.

	\begin{table}[!htbp] 
		\caption{
			Evolutions of the corresponding parameters during iterations between Steps 3 and 4 in Algorithm \ref{alg:1}.
			} 
		\label{table01}
		\centering
		\scriptsize
		\begin{tabular}{ccccccccc} 
		\toprule 
		$i$&$\rho (A_i)$&$\rho (\frac{\prod_{j=0}^{i}{c_{j}}}{b}A_{i})$&$\rho ^{-1}(\frac{\prod_{j=0}^{i}{c_{j}}}{b}A_{i+1})$&$\frac{\prod_{j=0}^{i}{c_{j}}}{b}$&$c _i$\\  
		\hline 
		 0&1.0176 &0.5044&1.9840&  0.4956 & 1\\   
		1& 1.0169 &0.5040&1.5674&0.6278&1.2666\\
		2&1.0163& 0.6380&1.0635 &0.9469&1.5084\\
		3&0.9930 & 0.9403&1.0498&0.9595&1.0133\\
		4&0.9928&0.9525& &1.0056&1.0480\\
		\bottomrule 
		\end{tabular}
				\end{table}

For model-free SPI algorithm,  Steps $1-15$ of Algorithm \ref{alg:2}  is executed by initializing $\widetilde{K}_0 = 0$, $b = 1$, $\delta=0.1$, and after $1$ iterations of learning, the value of $b=1.1$ that makes the condition $\widetilde{P}_{0}>0$ hold true has been found without relying on knowledge of the system matrices.

Then, as shown in Table \ref{table02}, 
after 5 iterations of Steps $3-6$ in the learning process, we observed that  $\frac{\prod_{j=0}^{4}{c_{j}}}{b}=1.0096 > 1$ holds true, which indicates that the stable control gain of the original system has been obtained given as $\widetilde{K}_{4}=[0.2649,0.6001,0.6767]$ and $\rho (A-B\widetilde{K}_4)=0.9790<1$. 
Moreover, 
Table \ref{table02} clearly reflects  the selection of $c_{i}$, as well as the corresponding evolution of 
$\sigma ( \mathcal{Q} _i ) $ and 
$\sigma (\widetilde{P}_{i}\mathcal{Q}_{i}^{-1})^{1/2}$,
 throughout the entire iterative process of the Steps $1-15$ of Algorithm \ref{alg:2}. It is evident that 
  $\mathcal{Q} _i$ is always reversible and $c_{i}$ consistently remains smaller than  
 $\sigma (\widetilde{P}_{i}\mathcal{Q}_{i}^{-1})^{1/2}$ 
 during the iterations.

\begin{table}[!htbp] 
	\caption{Evolutions of the corresponding parameters during iterations between Steps $3-6$ in Algorithm \ref{alg:2}.	
	}
		\label{table02}
	\centering
	\scriptsize
	\begin{tabular}{ccccccccc} 
	\toprule 
	$i$&$\rho (A_i)$&$\frac{1}{b}\prod_{j=0}^{i}{c_{j}}$&
	$\sigma (\widetilde{P}_{i}\mathcal{Q}_{i}^{-1})^{1/2}$
	&$c _i$& 
	$\sigma \left( \mathcal{Q} _i \right) $\\  
	\hline 
	 0&1.0176 &0.9091&1.0862& 1 & 1.4659\\   
	1&  1.0059 &0.9618&1.0415&1.0580 &1.9066 \\
	2& 0.9881&  0.9646&1.0420 &1.0029&1.9334\\
	3& 0.9897 & 0.9770&1.0356 &1.0129 &2.0826\\
	4&0.9790&1.0096&&1.0333 &\\
	\bottomrule 
	\end{tabular}
			\end{table}
  \begin{figure}
	\centering
	\includegraphics[width=0.42\textwidth]{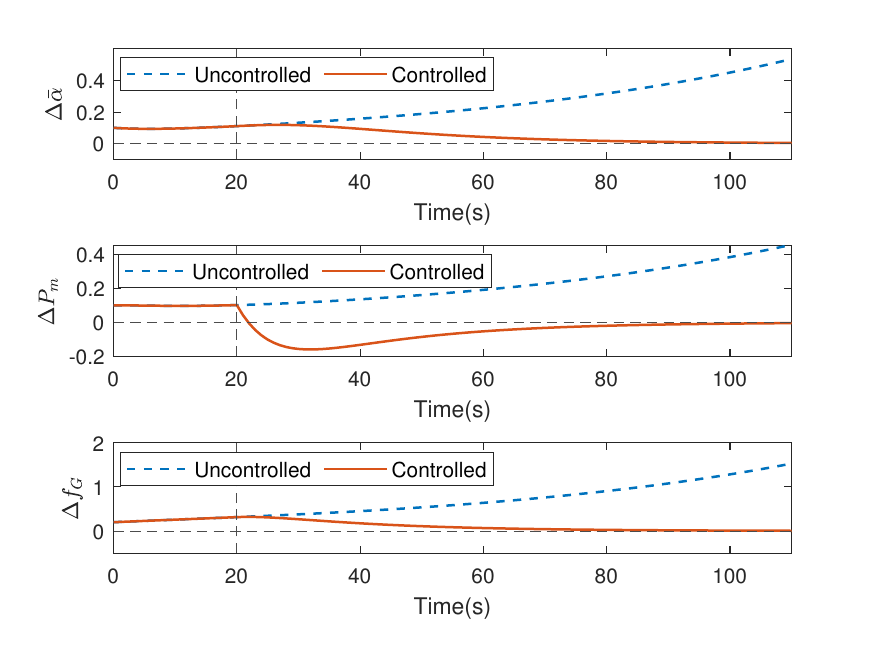}
	\caption{Trajectories of the system with and without a  controller}
	\label{fig:5}
\end{figure}
Given $\widetilde{K}_{4}$ as the stable control gain, Steps $16-23$ of Algorithm \ref{alg:2} are implemented to ascertain the optimal solution.  As illustrated in Fig. \ref{fig2}, after $5$ iterations, the optimal  $P^{*}$ and $K^{*}$ are achieved.

The state trajectories of the discretized power system (\ref{eq29}) under the controller designed by Algorithm \ref{alg:2} are depicted in Fig. \ref{fig:5}.  In these system trajectories, no control input is exerted on the system during the first 20 seconds.
To facilitate comparison, the state trajectories of the system with zero control input are also depicted in Fig. \ref{fig:5}. It is evident that, upon implementing the proposed control policy developed in Algorithm \ref{alg:2},  all system states converge to zero.


			



Next, we compare the model-free SPI algorithm  with the conventional VI   in \cite{VI} and  HI algorithm presented in \cite{HI2023adaptive}, and the data-based Q-learning algorithm (DQL) in \cite{RL_2}. These three algorithms represent the most classical, currently most prevalent, and relatively novel approaches  for solving the optimal control problem
without necessitating an initial stabilizing control gain, respectively. 

 Given the initialization requirements for the three algorithms: VI  requires ${P}_{0}$, HI  requires both ${P}_{0}$ and $\hat{Q}$, and SPI   requires $\widetilde{K}_0$. We therefore randomly generated 100 instances of ${P}_{0}$, set $\widetilde{K}_0=(R+B^{T}{P}_0B)^{-1}B^{T}{P}_{0}A$ to apply these four algorithms for solving the optimal control problem, respectively.
To ensure fairness, we accounted for the iteration required to determine the parameter \( b \) within the SPI process. 
Additionally, we defined  $\hat{Q} = Q+I_3$, $b=1$ and $\delta=0.7i$, where $i$ denotes the number of iterations, and $||\widetilde{K}_{i} - K^{*}|| < 10^{-4}$ was used as the convergence criterion for all four algorithms. 
The average running time and number of iterations required for convergence are  illustrated in Table \ref{table03}. 

Clearly, in the simulation of this power system, the SPI algorithm requires fewer iterations to converge to $K^{*}$ compared to the other three algorithms. It is also noteworthy that DQL takes the least amount of time, due to its model-free design, which requires less data and allows for faster equation solving. 
However, as noted in Remark \ref{Remark_comparison}, the scaling technique proposed in this paper offers broader applicability compared to the method of designing stable control gain matrix in the DQL algorithm.
Therefore, combining SPI with the Q-learning method from \cite{RL_2} to reduce runtime and efficiently address other control problems would be a valuable extension of this research.
 All the simulations were  run on a 12-core Intel i7-12650H 2.30 GHz CPU with 16-GB RAM and in MATLAB R2021b; no GPU computing was utilized.

\begin{table}[!htbp] 
	\caption{Performance comparison of the model-free SPI algorithm and VI, HI, and DQL.
		}
		\label{table03}
	\centering
	\scriptsize
	\begin{tabular}{ccccccccc} 
	\toprule 
	Algorithm& SPI& VI&
	HI
	&DQL\\  
	\hline 
	run-time (s)&0.0017 &0.0063& 0.0011& 0.0006 \\
	No. of iterations& 10 &116 &18 &13  \\
		\bottomrule 
	\end{tabular}
			\end{table}

\section{Conclusion} \label{sec_6}

In this paper, we have  proposed a scaling PI technique for DT systems. Based on this technique, two algorithms, namely model-based SPI  and model-free SPI algorithms, are introduced to successfully solve the optimal control problem for DT systems. It is noteworthy that these algorithms can obtain a stabilizing control gain by scaling a stable system sequence towards the original system.
This obviates the requirement for an initial stabilizing control gain in the PI.  Finally, we applied the algorithms to a power system and showcased their effectiveness, indicating their practical applicability in efficiently controlling power systems. 
 Theoretical findings and proposed algorithms in this paper are believed to have significant practical value for optimal control problems.
  In  future studies, we will extend the obtained results to deal with DT $H_\infty$ control problems, output feedback regulation, and optimal control problems for Markovian jump systems. In addition, considering the highly nonlinear nature of the real world,
   applying the proposed algorithm to nonlinear systems will also be a direction of our future research.


\setcounter{secnumdepth}{0} 
%
%
\addcontentsline{toc}{section}{References}



\bibliographystyle{apalike2}

\bibliography{cas-refs}

\end{document}